\documentclass[english,12pt,oneside]{amsproc}
\usepackage[T2A]{fontenc}
\usepackage[english]{babel}
\usepackage{sseq}
\usepackage{graphics}
\usepackage{amsfonts, amssymb, amscd, amsmath}
\usepackage{latexsym}
\usepackage[matrix,arrow,curve]{xy}
\usepackage{mathabx,mathtools}
\usepackage{color}
\usepackage{mathrsfs}
\usepackage{mathdots}
\usepackage{pigpen}
\usepackage{tikz}
\usetikzlibrary{matrix}

\oddsidemargin=1cm
\textwidth=15cm \allowdisplaybreaks[1]

\DeclareMathOperator{\pt}{pt} 
 \DeclareMathOperator{\id}{id}

\DeclareMathOperator{\sgn}{sgn}
 \DeclareMathOperator{\im}{Im}
 
\DeclareMathOperator{\ver}{Vert} 
 \DeclareMathOperator{\fac}{Fac}
\DeclareMathOperator{\spi}{Sp} \DeclareMathOperator{\dia}{Df}
\DeclareMathOperator{\soc}{Soc}

\newcommand{\Zo}{\mathbb{Z}}

\newcommand{\Co}{\mathbb{C}}

\newcommand{\ko}{\Bbbk}

\newcommand{\simc}{\!\!\sim}

\newcommand{\br}{\widetilde{\beta}}

\newcommand{\ddb}{b_1}
\newcommand{\capp}{\!\!\frown\!\!}

\newcommand{\eqd}{\stackrel{\text{\tiny def}}{=}}

\newcommand{\minel}{\hat{0}}

\newcommand{\less}[1]{\stackrel{#1}{<}}
\newcommand{\gess}[1]{\stackrel{#1}{>}}

\newcommand{\wh}[1]{{\widehat{#1}}}

\newcommand{\inc}[2]{{[#1\!:\!#2]}}

\newcommand{\E}[1]{(E_{#1})}
\newcommand{\dif}[1]{(d_{#1})}

\newcommand{\Ea}[1]{(\dot{E}_{#1})}
\newcommand{\difa}[1]{(\dot{d}_{#1})}
\newcommand{\fa}{\dot{f}}

\newcommand{\difm}[1]{d_{#1}^{-}}

\newcommand{\Hr}{\widetilde{H}}
\newcommand{\dd}{\partial}

\newcounter{stmcounter}[section]
\newcounter{thcounter}

\numberwithin{equation}{section}

\theoremstyle{plain}
\newtheorem{cor}[stmcounter]{Corollary}

\newtheorem{thm}[thcounter]{Theorem}
\newtheorem{prop}[stmcounter]{Proposition}
\newtheorem{lemma}[stmcounter]{Lemma}
\newtheorem{defin}[stmcounter]{Definition}

\theoremstyle{definition}

\newtheorem{rem}[stmcounter]{Remark}
\newtheorem{con}[stmcounter]{Construction}

\begin{document}

\title[Homology cycles in torus manifolds]{Homology cycles in manifolds with locally standard torus actions}

\author[Anton Ayzenberg]{Anton Ayzenberg}
\address{Department of Mathematics, Osaka City University, Sumiyoshi-ku, Osaka 558-8585, Japan.}
\email{ayzenberga@gmail.com}

\date{\today}
\thanks{The author is supported by the JSPS postdoctoral fellowship program.}
\subjclass[2010]{Primary 57N65, 55N45; Secondary 55R91, 13F55,
13F50, 05E45, 06A07, 16W50, 13H10} \keywords{locally standard
torus action, orbit type filtration, face submanifold,
characteristic submanifold, intersection product, face ring,
Buchsbaum simplicial poset, socle of a module}

\begin{abstract}
Let $X$ be a $2n$-manifold with a locally standard action of a
compact torus $T^n$. If the free part of action is trivial and
proper faces of the orbit space $Q$ are acyclic, then there are
three types of homology classes in $X$: (1) classes of face
submanifolds; (2) $k$-dimensional classes of $Q$ swept by actions
of subtori of dimensions $<k$; (3) relative $k$-classes of $Q$
modulo $\partial Q$ swept by actions of subtori of dimensions
$\geqslant k$. The submodule of $H_*(X)$ spanned by face classes
is an ideal in $H_*(X)$ with respect to the intersection product.
It is isomorphic to $(\mathbb{Z}[S_Q]/\Theta)/W$, where
$\mathbb{Z}[S_Q]$ is the face ring of the Buchsbaum simplicial
poset $S_Q$ dual to $Q$; $\Theta$ is the linear system of
parameters determined by the characteristic function; and $W$ is a
certain submodule, lying in the socle of $\mathbb{Z}[S_Q]/\Theta$.
Intersections of homology classes different from face submanifolds
are described in terms of intersections on $Q$ and~$T^n$.
\end{abstract}

\maketitle

%\tableofcontents

%%%%%%%%%%%%%%%%%%%%%%%%%%%%%%%%%%%%%%%%%%%%%%%%%%%%%%%%%%%%%%
%
%
%
%
%
%
%
%%%%%%%%%%%%%%%%%%%%%%%%%%%%%%%%%%%%%%%%%%%%%%%%%%%%%%%%%%%%%%

\section{Introduction}\label{SecIntro}

An action of a compact torus $T^n$ on a smooth compact manifold
$M$ of dimension $2n$ is called locally standard if it is locally
isomorphic to the standard action of $T^n$ on~$\Co^n$. The orbit
space $Q=M/T^n$ has a natural structure of a manifold with corners
in which open $k$-dimensional faces of $Q$ correspond to
$k$-dimensional orbits of an action. Every manifold with locally
standard torus action is equivariantly homeomorphic to the
quotient model $X=Y/\simc$, where $Y$ is a principal $T^n$-bundle
over $Q$ and $\sim$ is an equivalence relation determined by the
characteristic function on $Q$ \cite{Yo}.

This paper is the third in a series of works, where we study
topology of $X$ under the assumption that proper faces of the
orbit space are acyclic and $Y$ is a trivial bundle. Previous
works \cite{AyV1,AyV2} were devoted to the homological spectral
sequence associated with the filtration of $X$ by orbit types. In
this paper we give a geometrical description of homology cycles on
$X$.

In the case when all faces of $Q$ including $Q$ itself are
acyclic, the topology of the corresponding manifold $X$ is known
(see \cite{MasPan}). In this case the equivariant cohomology ring
is isomorphic to the face ring of the simplicial poset $S_Q$ dual
to $Q$: $H^*_T(X;\Zo)\cong\Zo[S_Q]$. As a ring, it is generated by
equivariant cycles, dual to face submanifolds of $X$ (these
generators correspond to the standard generators of the face
ring). The spectral sequence of the Borel fibration
$ET^n\times_TX\to BT^n$ collapses at a second page. A
fiber-inclusion map $\iota\colon X\hookrightarrow ET^n\times_TX$
induces a surjective ring homomorphism $\iota^*\colon
H^*_T(X;\Zo)\to H^*(X;\Zo)$, whose kernel is the image of
$H^{>0}(BT^n;\Zo)$ under $\pi^*$. Thus, $H^*(X;\Zo)\cong
\Zo[S_Q]/(\theta_1,\ldots,\theta_n)$, where $\theta_i$ are the
images of generators $v_i$ of the ring
$H^*(BT^n;\Zo)\cong\Zo[v_1,\ldots,v_n]$. The sequence
$(\theta_1,\ldots,\theta_n)$ is a linear system of parameters in
$\Zo[S_Q]$. Since $S_Q$ is Cohen--Macaulay, this is a regular
sequence and $\dim H^{2k}(X)=h_k(S_Q)$.

In the case when only \emph{proper} faces of $Q$ are acyclic, this
approach is inapplicable. The spectral sequence of the Borel
fibration does not collapse at a second page. We still have the
ring homomorphism $H^*_T(X)/(\pi^*H^{>0}(BT^n))\to H^*(X)$, but it
is neither injective nor surjective.

Nevertheless, there is an apparent connection between topology of
spaces with torus actions and the theory of face rings. In
\cite{AMPZ} we proved that there exists an isomorphism of rings
(and $H^*(BT^n)$-modules):
\begin{equation}\label{eqEqCohomOur}
H^*_T(X;\Zo)\cong \Zo[S_Q]\oplus H^*(Q;\Zo)
\end{equation}
(the units of the rings in the direct sum are identified).

When proper faces of $Q$ are acyclic, the dual simplicial poset
$S_Q$ is Buchsbaum \cite[Cor.6.3]{AyV1}. There is a standard tool
in combinatorics and commutative algebra devised to study
Buchsbaum simplicial complexes, namely, the $h'$-vector. By
definition, the $h'$-numbers of a Buchsbaum simplicial poset $S$
are the dimensions of homogeneous components of the quotient
algebra $\ko[S]/(\theta_1,\ldots,\theta_n)$, where
$\theta_1,\ldots,\theta_n$ is any linear system of parameters.
These numbers do not depend on a linear system of parameters, and
can be expressed in terms of the ordinary $h$-numbers and Betti
numbers of $S$ (see \cite{Sch,NS} or Definition
\ref{definHvectors} below). In \cite[Th.3]{AyV2} we proved that
$\dim \E{X}^2_{q,q}=h'_{n-q}(S_Q)$, where $\E{X}^*_{*,*}$ is the
homological spectral sequence associated with the orbit type
filtration of $X$.

In this paper we describe the geometrical structure of homology
cycles on $X$.

\begin{thm}
Homology classes of $X$ have three different types:
\begin{enumerate}
\item the classes of face submanifolds (we call them face classes);

\item the classes, represented by $k$-cycles of $Q$, swept
by an action of a subtorus of dimension $<k$ (these classes will
be called spine classes);

\item the classes, represented by relative $k$-cycles of $Q$ modulo $\dd
Q$ with $k<n$, swept by an action of a subtorus of dimension
$\geqslant k$ (these classes will be called diaphragm classes).
\end{enumerate}

Linear relations on face classes are of two types: the relations
appearing in the ring $\ko[S_Q]/(\theta_1,\ldots,\theta_n)$, and
additional relations lying in a socle of
$\ko[S_Q]/(\theta_1,\ldots,\theta_n)$.

Intersections of face classes are encoded by the multiplication in
the face ring of~$S_Q$. Proper face classes span the ideal of
$H_*(X)$ with respect to the intersection product. Intersections
of other classes are described by means of the intersection
products on $Q$ and $T^n$.
\end{thm}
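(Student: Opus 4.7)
The plan is to organize the proof around the orbit-type filtration spectral sequence $\E{X}$ developed in \cite{AyV1,AyV2}, which converges to $H_*(X)$ and whose second page is computed in terms of the Buchsbaum poset $S_Q$. First I would produce explicit geometric representatives in each of the three families and then show that, taken together, they exhaust $H_*(X)$ by matching dimensions on each diagonal of $E^\infty$. For a face $F\subset Q$ the preimage $X_F\subset X$ is a face submanifold whose fundamental class lies on the main diagonal of the spectral sequence. For a cycle $\xi\in H_k(Q)$ and a subtorus $H\subset T^n$ of dimension $<k$, the sweep of $\xi$ by $H$ inside $X$ produces a spine class; similarly, for a relative cycle $\xi\in H_k(Q,\dd Q)$ swept by a subtorus of dimension $\geqslant k$, the swept cycle gives a diaphragm class. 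These constructions align with the off-diagonal structure of $E^\infty$ identified in the previous papers.

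For the face classes I would build a ring homomorphism $\psi\colon\Zo[S_Q]\to H_*(X)$ sending each generator $v_F$ to $[X_F]$, and verify that the linear system $\Theta$ determined by the characteristic function goes to zero under $\psi$, so that $\psi$ factors through $\Zo[S_Q]/\Theta$. The image is exactly the span of face classes, hence $\psi$ is surjective onto that subspace; identifying the kernel $W$ as a submodule of the socle is the delicate step. To do this I would combine two inputs: (a) a dimension count using $\dim\E{X}^2_{q,q}=h'_{n-q}(S_Q)$ from \cite[Th.3]{AyV2}, which fixes the total dimension of the face-class subspace; and (b) the observation that any relation among face classes not already present in $\Zo[S_Q]/\Theta$ must be annihilated by every positive-degree element of the face ring, because such a class would have to lie in a strictly deeper filtration stage and hence be killed by multiplication with any $v_F$.

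For the spine and diaphragm classes I would use the decomposition \eqref{eqEqCohomOur} to transfer information from equivariant cohomology to singular homology via Poincaré duality. The $H^*(Q)$-summand in \eqref{eqEqCohomOur} governs both types of sweeps: a cycle $\alpha\in H_k(Q)$ lifts to $X$ either through the projection $X\to Q$ combined with a low-dimensional subtorus action (giving spines), or through its Poincaré--Lefschetz dual in $H_{n-k}(Q,\dd Q)$ combined with a higher-dimensional subtorus (giving diaphragms). Surjectivity of this description onto the non-face part of $H_*(X)$ would then follow by matching these lifts with the remaining off-diagonal $E^\infty$-terms, using the complementary subtorus dimensions to separate the two cases.

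The intersection statements follow by transversality. The $T$-invariant submanifolds $X_F$ can be made mutually transverse in $X$ by choosing generic representatives, so $[X_F]\cdot[X_G]=[X_{F\cap G}]$ when $F,G$ meet transversely in $Q$, reproducing the face-ring product. The same transversality shows that face classes form an ideal, since any swept class $\xi\cdot H$ can be translated by the free torus action on the interior of $Q$ to meet $X_F$ transversely in a smaller swept class supported on $F$. For intersections of spine and diaphragm classes with each other the support reduces to a neighborhood of a principal orbit, where the computation splits via Künneth on $Q\times T^n$ into an intersection product on the base $Q$ and one on the fiber $T^n$. The main obstacle is the socle identification in the second step: showing that the extra kernel $W$ lies precisely in the socle requires a careful reading of the higher differentials $d_r$ and the full Buchsbaum structure of $S_Q$, rather than the Cohen--Macaulay computation that sufficed in the acyclic-$Q$ case of \cite{MasPan}.
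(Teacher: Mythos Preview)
Your overall architecture matches the paper: the orbit-type spectral sequence organizes $H_*(X)$, face classes sit on the diagonal, spine and diaphragm classes fill the off-diagonal terms, and the map $\ko[S_Q]\to H_*(X)$ factors through $\ko[S_Q]/\Theta$ with the first-type relations coming from $d^1$ and the extra relations from higher differentials. Your intersection arguments for face-with-face and spine/diaphragm-with-each-other are also essentially what the paper does.

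The real gap is your argument (b) for the socle. You want to show that an element $w\in W=\ker(\ko[S_Q]/\Theta\to H^*(X))$ satisfies $v_i\cdot w=0$ \emph{in} $\ko[S_Q]/\Theta$. Saying that $w$ ``lies in a strictly deeper filtration stage'' is not meaningful: $w$ sits in $E^2_{q,q}$ and is killed in $E^\infty_{q,q}$ because it is the image of a higher differential, but that tells you nothing about $v_i\cdot w$ as an element of $E^2_{q-1,q-1}\cong(\ko[S_Q]/\Theta)_{2(n-q+1)}$. At best a multiplicative structure on the spectral sequence would show $v_i\cdot w$ is again hit by a higher differential, i.e.\ $W$ is an ideal; it does not show $v_i\cdot w=0$ in $\ko[S_Q]/\Theta$. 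The paper circumvents this by introducing an auxiliary \emph{collar model} $\wh{X}=(\partial Q\times[0,1]\times T^n)/\!\sim$, a manifold with boundary in which the spectral sequence collapses at $E^2$ so that the face-class submodule of $H_*(\wh{X})$ is \emph{exactly} $\ko[S_Q]/\Theta$ in the relevant degrees. In $\wh{X}$ the cycle representing $w$ can be pushed off $\partial_0\wh{Q}$ into the collar, hence made disjoint from every $X_i$, and the vanishing $w\cap[X_i]=0$ in $H_*(\wh{X})$ translates directly to $v_i\cdot w=0$ in $\ko[S_Q]/\Theta$. This collar construction is the missing idea.

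A smaller point: your argument that proper face classes form an ideal is phrased backwards. Translating a swept class to meet $X_F$ transversely does not give ``a smaller swept class supported on $F$''; rather, the paper uses the projection formula $[X_I]\cap\alpha=\imath_*(\imath^*\kappa\frown[X_I])$ together with the fact that $F_I$ is acyclic, so $X_I$ is a manifold whose homology is generated entirely by \emph{its own} face classes. That acyclicity of proper faces is what forces the intersection back into the face-class ideal.
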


Precise statements are given in Propositions \ref{propHXstruct},
\ref{propTwoTypesOfRels}, \ref{propHXstructIntegers},
\ref{propIntersectFace}, \ref{propIntersections},
\ref{propFaceIsIdeal}, and Theorem \ref{thmSocle}.

Face classes and the elements of $H_k(Q,\dd Q)$ swept by the
action of the whole group $T^n$ are equivariant. This gives an
independent geometrical evidence for the formula
\eqref{eqEqCohomOur}.

%There exist additive isomorphisms
%$\E{X}^{2}_{q,q}\to\ko[S_Q]/(\theta_1,\ldots,\theta_n)_{2(n-q)}$
%which show that additive relations on the generators of $\ko[S_Q]$
%are inherited by face classes. However, there exist other
%relations, namely the images of higher differentials hitting
%$\E{X}^2_{q,q}$. These relations are described explicitly in terms
%of the generators.

The paper may be briefly outlined as follows. Section
\ref{SecPrelim} contains basic definitions and outlines the
previous results. In Section \ref{SecComput} we make technical
preparations for Section \ref{SecFaceSubmfds}, which is devoted to
linear relations on face classes. In Section \ref{SecOtherCycles}
we realize non-face classes of $X$ as embedded pseudomanifolds.
These geometrical constructions imply a partial description of
intersection theory on $X$, which is done in Section
\ref{SecIntersect}.

Two examples of computations are discussed in Section
\ref{SecExamples}. A very particular $4$-dimensional example is
worked out, and the reader is encouraged to refer to it while
reading other parts of the paper. The second example is more
general: we apply our technique to the class of orientable toric
origami manifolds with acyclic proper faces of the orbit space,
and rediscover some results of \cite{AMPZ}. A supplementary space
$\wh{X}$ is introduced in the last section. This space can be
considered as a $T^n$-invariant tubular neighborhood of the union
of characteristic submanifolds in $X$. By using intersection
theory on $\wh{X}$, we prove that certain elements of
$\ko[S_Q]/(\theta_1,\ldots,\theta_n)$ lie in the socle of this
module. This gives a geometrical interpretation of the result
obtained by Novik--Swartz~\cite{NS}.

%%%%%%%%%%%%%%%%%%%%%%%%%%%%%%%%%%%%%%%%%%%%%%%%%%%%%%%%%%%%%%
%
%
%
%
%
%
%
%%%%%%%%%%%%%%%%%%%%%%%%%%%%%%%%%%%%%%%%%%%%%%%%%%%%%%%%%%%%%%

\section{Preliminaries and previous results}\label{SecPrelim}

%%%%%%%%%%%%%%%%%%%%%%%%%%%%%%%%%%%%%%%%%%%%%%%%%%%%%%%%%%%%%%
%
%
%
%%%%%%%%%%%%%%%%%%%%%%%%%%%%%%%%%%%%%%%%%%%%%%%%%%%%%%%%%%%%%%

\subsection{Manifolds with locally standard torus actions}

An action of $T^n$ on a (compact connected smooth) manifold
$M^{2n}$ is called \emph{locally standard}, if $M$ has an atlas of
$T^n$-invariant charts, each equivalent to an open $T^n$-invariant
subset of the standard action of $T^n$ on $\Co^n$. The reader is
referred to \cite{BPnew} or \cite{Yo} for the precise definition.
The orbit space of a locally standard action is a compact
connected $n$-dimensional manifold with corners with the property
that every codimension $k$ face of $Q$ lies in exactly $k$ facets
of $Q$ (such manifolds with corners were called \emph{nice} in
\cite{MasPan}, or \emph{manifolds with faces} elsewhere).

\begin{defin}
A finite partially ordered set (poset) $S$ is called simplicial if
(1)~there is a minimal element $\minel\in S$; (2)~for each element
$J\in S$ the lower order ideal $\{I\in S\mid I\leqslant J\}$ is
isomorphic to the poset of faces of a $k$-simplex for some number
$k$, called the dimension of $I$.
\end{defin}

The elements of $S$ are called simplices. Simplices of dimension
$0$ are called vertices. The number $|I|=\dim I+1$ is equal to the
number of vertices of $I$ and is called the rank of $I$. The set
of vertices of a simplicial poset or a simplex is denoted by
$\ver(\cdot)$.

Every manifold with corners $Q$ determines a dual poset $S_Q$
whose elements are the faces of $Q$ ordered by the reversed
inclusions. When $Q$ is a nice connected manifold with corners,
$S_Q$ is a simplicial poset. We denote abstract elements of $S_Q$
by $I$, $J$, etc. and the corresponding faces of $Q$ by $F_I$,
$F_J$, etc. There holds $\dim F_I=n-|I|$. The minimal element of
$S_Q$ corresponds to the maximal face of $Q$, i.e. $Q$ itself.
Vertices of $S_Q$ correspond to facets of $Q$. The set of facets
of $Q$ is denoted by $\fac(Q)$.

Let $Q$ be the orbit space of locally standard action, and let
$x\in F^\circ$ be a point in the interior of a facet $F\in
\fac(Q)$. Then the stabilizer of $x$, denoted by $\lambda(F)$, is
a 1-dimensional toric subgroup in $T^n$. If $F_I$ is a codimension
$k$ face of $Q$, contained in facets $F_1,\ldots,F_k\in \fac(Q)$,
then the stabilizer of $x\in F_I^{\circ}$ is the $k$-dimensional
torus $T_I=\lambda(F_1)\times\ldots\times\lambda(F_k)\subset T^n$,
where the product is free inside $T^n$. This puts a specific
restriction on subgroups $\lambda(F)$. In general, a map
\begin{equation}\label{eqCharFunc}
\lambda\colon \fac(Q)\to \{\mbox{1-dimensional toric subgroups of
} T^n\}
\end{equation}
is called \emph{a characteristic function}, if, whenever facets
$F_1,\ldots,F_k$ have nonempty intersection, the map
\[
\lambda(F_1)\times\ldots\times\lambda(F_k)\to T^n,
\]
induced by inclusions $\lambda(F_i)\hookrightarrow T^n$, is
injective and splits. This condition is called $(\ast)$-condition.
Let $i\in \ver(S_Q)$ be the vertex of $S_Q$, and
$T_i=\lambda(F_i)$ be the value of characteristic function. Let
$\omega_i\in H_1(T^n;\ko)\cong\ko^n$ be the fundamental class of
$T_i$. This class is defined uniquely up to sign.

Let $\mu\colon M\to Q$ be the projection to the orbit space. The
free part of the action has the form $\mu|_{Q^{\circ}}\colon
\mu^{-1}(Q^{\circ})\to Q^{\circ}$, where $Q^{\circ}=Q\setminus \dd
Q$ is the interior of the manifold with corners. It is a principal
torus bundle over $Q^{\circ}$ which can be uniquely extended over
$Q$; it determines a principal $T^n$-bundle $\rho\colon Y\to Q$.
Therefore any manifold with locally standard action defines three
objects: a nice manifold with corners $Q$, a principal torus
bundle $\rho\colon Y\to Q$, and a characteristic
function~$\lambda$. One can recover the manifold $M$, up to
equivariant homeomorphism, from these data by the following
standard construction.

\begin{con}[Quotient construction]\label{conModelSpace}
Let $\rho\colon Y\to Q$ be a principal $T^n$-bundle over a nice
manifold with corners, and $\lambda$ be a characteristic function
on $\fac(Q)$. Consider the space $X\eqd Y/\sim$, where $y_1\sim
y_2$ if and only if $\rho(y_1)=\rho(y_2)\in F_I^{\circ}$ for some
face $F_I$ of $Q$, and $y_1,y_2$ lie in the same $T_I$-orbit of
the $T^n$-action on $Y$. Let $f\colon Y\to X$ be the quotient map.
\end{con}

Every manifold $M$ with locally standard torus action is
equivariantly homeomorphic to its model $X$ (\cite[Cor.2]{Yo}). In
the rest of the paper we use the model $X$ instead of $M$.

\begin{rem}
In the paper we work with a smooth manifold with corners $Q$ and
smooth manifolds $X\cong M$, but this is done basically to
simplify the exposition. The quotient model $X=(Q\times
T^n)/\simc$ can obviously be defined for a larger class of spaces.
If $Q$ is a homology manifold with a simple stratification of the
boundary, in which faces are homology manifolds with boundaries,
then $X$ is a closed homology manifold. All results of the paper
are valid in this setting as can be seen from the proofs.
\end{rem}

%%%%%%%%%%%%%%%%%%%%%%%%%%%%%%%%%%%%%%%%%%%%%%%%%%%%%%%%%%%%%%
%
%
%
%%%%%%%%%%%%%%%%%%%%%%%%%%%%%%%%%%%%%%%%%%%%%%%%%%%%%%%%%%%%%%

\subsection{Filtrations}

There are natural topological filtrations on $Q$, $Y$ and $X$.
Namely, $Q_k\subseteq Q$ is the union of $k$-dimensional faces of
$Q$, $Y_k=\rho^{-1}(Q_k)\subseteq Y$, and $X_k=f(Y_k)\subset X$ is
the union of toric orbits of dimension at most $k$. The maps
$\mu\colon X\to Q$, $\rho\colon Y\to Q$ and $f\colon Y\to X$
respect these filtrations. The homological spectral sequences
produced by these filtrations are denoted $\E{Q}^*_{*,*}$,
$\E{Y}^*_{*,*}$, and $\E{X}^*_{*,*}$. The map $f$ induces the
morphism of spectral sequences $f^r_*\colon\E{Y}^r\to\E{X}^r$.

The subsets $\rho^{-1}(F_I)\subset Y$ and $\mu^{-1}(F_I) \subset
X$ which cover the face $F_I\subset Q$ are denoted $Y_I$ and $X_I$
respectively. Note that the subset $X_I$ is a closed submanifold
of $X$ of codimension $2|I|$. It is called a \emph{face
submanifold}. Face submanifolds of codimension $2$ are called
\emph{characteristic submanifolds}. They correspond to facets of
$Q$.

The first page of $\E{Q}^*_{*,*}$ has the form
\[
\E{Q}^1_{p,q} = H_{p+q}(Q_p,Q_{p-1})\cong \bigoplus_{I\in S_Q,
\dim F_I=p}H_{p+q}(F_I,\dd F_I).
\]
and the first differential $\dif{Q}^1$ is the sum of the maps
\begin{multline}\label{eqMattachingMap}
m^q_{I,J}\colon H_{q+\dim F_I}(F_I,\dd F_I)\to H_{q+\dim
F_I-1}(\dd F_I)\to \\ \to H_{q+\dim F_I-1}(\dd F_I, \dd
F_I\setminus F_J^{\circ})\cong H_{q+\dim F_J}(F_J, \dd F_J),
\end{multline}
defined for every face $F_I$ and $F_J\in \fac(F_I)$. Here the
first map is the connecting homomorphism in the homology exact
sequence of $(F_I,\dd F_I)$, and the last isomorphism is due to
excision.

%%%%%%%%%%%%%%%%%%%%%%%%%%%%%%%%%%%%%%%%%%%%%%%%%%%%%%%%%%%%%%
%
%
%
%%%%%%%%%%%%%%%%%%%%%%%%%%%%%%%%%%%%%%%%%%%%%%%%%%%%%%%%%%%%%%

\subsection{Almost acyclic case}

Let $\ko$ be a ground ring. When coefficients in the notation of
(co)homology are omitted, they are supposed to be in $\ko$. From
now on we impose two restrictions on $X$ mentioned in the
introduction. First, $Q$ is an orientable manifold and all its
proper faces are acyclic (over $\ko$). Second, the principal torus
bundle $Y\to Q$ is trivial. Thus $X=(Q\times T^n)/\simc$. The
following propositions were proved in \cite{AyV1}, \cite{AyV2}.

\begin{prop}\label{propSisBuch}
The poset $S_Q$ is a Buchsbaum simplicial poset (over $\ko$).
\end{prop}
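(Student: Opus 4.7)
The plan is to verify the Reisner--Schenzel criterion: a pure simplicial poset $S$ of dimension $d$ is Buchsbaum over $\ko$ iff the link of every nonempty simplex is Cohen--Macaulay, equivalently iff $\Hr_i(|\lk_S\sigma|;\ko)=0$ for every $\emptyset\ne\sigma\in S$ and every $i<\dim\lk_S\sigma$. Purity of $S_Q$ is immediate from the manifold-with-corners structure: every vertex of $Q$ (a minimal face) lies in exactly $n$ facets, so every maximal element of $S_Q$ has rank $n$ and $S_Q$ has pure dimension $n-1$.

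For a nonempty $I\in S_Q$ the upper order ideal $\{J\geqslant I\}$ is in bijection with the faces of $Q$ contained in $F_I$, and this bijection identifies $\lk_{S_Q}(I)$ with the dual simplicial poset $S_{F_I}$. Thus it suffices to show that $S_F$ is Cohen--Macaulay over $\ko$ for every proper face $F$ of $Q$. I would argue by induction on $\dim F$. Since all faces of such an $F$ are themselves proper faces of $Q$, hence acyclic, and since links of nonempty simplices in $S_F$ are again of the form $S_{F'}$ for smaller faces $F'$, Reisner's criterion for simplicial posets reduces the inductive step to the key geometric claim: if $F$ is a nice compact manifold with corners of dimension $d$ such that $F$ and all its faces are $\ko$-acyclic, then $|S_F|$ is a $\ko$-homology $(d-1)$-sphere.

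To prove this claim I would combine two ingredients. First, Lefschetz duality forces $\partial F$ itself to be a $\ko$-homology $(d-1)$-sphere: $F$ is an orientable $\ko$-acyclic manifold with boundary, so $\Hr_i(\partial F;\ko)\cong\Hr^{d-i-1}(F;\ko)$ vanishes for $i<d-1$. Second, there is a homology equivalence $|S_F|\simeq\partial F$ obtained from a \v{C}ech-type spectral sequence applied to the cover of $\partial F$ by its facets: the $(\ast)$-condition together with niceness ensures every nonempty facet intersection is a disjoint union of acyclic faces of $F$, so the spectral sequence collapses onto its nerve, and after recording the connected components of the intersections this nerve is precisely $|S_F|$. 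The main technical obstacle is this second ingredient: matching simplices of the dual poset with strata of $\partial F$ and verifying the collapse of the \v{C}ech spectral sequence requires careful use of the acyclicity of \emph{every} face of $F$, not just $F$ itself; this is exactly where the hypothesis on proper faces of $Q$ is fully exploited. Once the key claim is established, the induction closes and Reisner--Schenzel delivers the Buchsbaum property of $S_Q$.
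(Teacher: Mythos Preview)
The paper does not give its own proof of this proposition; it is quoted as a result from the author's earlier work \cite[Cor.~6.3]{AyV1}. So there is no in-paper argument to compare against, and the question is whether your sketch stands on its own.

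Your outline is essentially correct. The identification $\lk_{S_Q}(I)\cong S_{F_I}$, the inductive reduction to showing that $|S_F|$ is a $\ko$-homology sphere for each proper face $F$, and the two-step verification (Lefschetz duality for $\partial F$, then a nerve/\v{C}ech collapse identifying $H_*(\partial F)$ with $H_*(|S_F|)$) are all sound. One small correction: the $(\ast)$-condition is irrelevant here --- it concerns the characteristic function $\lambda$, while the Buchsbaum property of $S_Q$ depends only on the face combinatorics of $Q$. What you actually need (and what suffices) is \emph{niceness}: every codimension-$k$ face lies in exactly $k$ facets, so an intersection of $k$ facets of $F$ is a disjoint union of codimension-$k$ faces, each acyclic by hypothesis. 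You should also remark that proper faces of $Q$ inherit orientability from $Q$ (via the framing of their normal bundles), so that Lefschetz duality applies over any $\ko$.

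A stylistic remark: instead of the \v{C}ech spectral sequence for the closed cover, you could equivalently run the spectral sequence of the skeletal filtration $Q_0\subset Q_1\subset\cdots\subset Q_{d-1}=\partial F$; acyclicity of all faces forces the $E^1$-page to coincide with the simplicial chain complex of $S_F$, which is exactly the mechanism behind Proposition~\ref{propEQstruct} in this paper and is presumably closer to how \cite{AyV1} argues. Either route works.
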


\begin{prop}\label{propEQstruct}
There exists a homological spectral sequence
$\Ea{Q}^r_{p,q}\Rightarrow H_{p+q}(Q)$, $\difa{Q}^r\colon
\Ea{Q}^r_{p,q}\to \Ea{Q}^r_{p-r,q+r-1}$ with the properties:
\begin{enumerate}
\item $\Ea{Q}^1=H(\E{Q}^1, \difm{Q})$, where the
differential $\difm{Q}\colon \E{Q}^1_{p,q}\to \E{Q}^1_{p-1,q}$
coincides with $\dif{Q}^1$ for $p<n$, and vanishes otherwise.

\item The module $\Ea{Q}^r_{*,*}$ coincides with $\E{Q}^r_{*,*}$
for $r\geqslant 2$ .

\item
\[
\Ea{Q}^1_{p,q}=\begin{cases}
H_p(\dd Q),\mbox{ if } q=0, p<n;\\
H_{q+n}(Q,\dd Q),\mbox{ if } p=n, q\leqslant 0;\\
0,\mbox {otherwise.}
\end{cases}
\]

\item Nontrivial differentials for $r\geqslant 1$ have pairwise different domains
and targets. They have the form $\difa{Q}^r\colon
\Ea{Q}^r_{n,1-r}\to \Ea{Q}^r_{n-r,0}$ and coincide with the
connecting homomorphisms $\delta_{n+1-r}\colon H_{n+1-r}(Q,\dd
Q)\to H_{n-r}(\dd Q)$.
\end{enumerate}
\end{prop}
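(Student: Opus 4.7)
\emph{The plan.} I would first use the acyclicity of proper faces to pin down the first page $\E{Q}^1$ explicitly, and then realize $\Ea{Q}$ as the long exact sequence of the pair $(Q,\dd Q)$ unfolded across the pages of a spectral sequence. For a proper face $F_I$ of dimension $p<n$, acyclicity combined with the manifold-with-corners structure (and Proposition~\ref{propSisBuch} applied inductively to $F_I$) forces $\dd F_I$ to be a $\ko$-homology $(p-1)$-sphere; hence $H_{p+q}(F_I,\dd F_I)$ is concentrated in $q=0$, where it is free of rank one. For $p=n$ the only face is $Q$ itself, and $H_{n+q}(Q,\dd Q)$ vanishes for $q>0$ by dimension. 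Consequently $\E{Q}^1$ is supported on the union of the bottom row ($q=0$, $p<n$) and the rightmost column ($p=n$, $q\leqslant 0$).

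\emph{From $\E{Q}^1$ to $\Ea{Q}^1$ and identification with $\E{Q}^2$.} Since $\dif{Q}^1$ lowers $p$ by one, the columns $p<n$ form a subcomplex $(F_*,\difm{Q})$ of $(\E{Q}^1,\dif{Q}^1)$, and by formula~\eqref{eqMattachingMap} this subcomplex is precisely the cellular chain complex of $\dd Q$ associated with its face decomposition; its homology is therefore $H_p(\dd Q)$ in the bottom row. The quotient $\E{Q}^1/F_*$ is the top column with zero induced differential and contributes $H_{n+q}(Q,\dd Q)$. Assembling these gives the description of $\Ea{Q}^1$ in item~(3). Writing $\dif{Q}^1=\difm{Q}+\delta$ with $\delta$ supported on the top column, the identities $\difm{Q}^2=\delta^2=\difm{Q}\delta=\delta\difm{Q}=0$ follow from $(\dif{Q}^1)^2=0$ together with the direction of the arrows. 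The short exact sequence $0\to F_*\to\E{Q}^1\to\E{Q}^1/F_*\to 0$ of chain complexes then yields a long exact sequence in homology whose connecting map, induced by $\delta$, coincides with the topological $\delta_n$ under the cellular identification of the bottom row with $H_*(\dd Q)$. This shows $H(\Ea{Q}^1,\difa{Q}^1)=H(\E{Q}^1,\dif{Q}^1)=\E{Q}^2$; taking $\Ea{Q}^r:=\E{Q}^r$ with $\difa{Q}^r:=\dif{Q}^r$ for $r\geqslant 2$ completes the construction and makes convergence to $H_{p+q}(Q)$ automatic.

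\emph{Higher differentials and the main obstacle.} Bidegree considerations applied to the $L$-shaped support of $\Ea{Q}^1$ force $\difa{Q}^r\colon\Ea{Q}^r_{n,1-r}\to\Ea{Q}^r_{n-r,0}$ to be the only location carrying a possibly nonzero differential on page $r\geqslant 1$, and neither its source nor its target is touched by differentials on earlier pages, so $\Ea{Q}^r_{n,1-r}=H_{n+1-r}(Q,\dd Q)$ and $\Ea{Q}^r_{n-r,0}=H_{n-r}(\dd Q)$; pairwise disjointness of domains and targets is immediate. The step I expect to be the principal obstacle is the identification of each $\difa{Q}^r$ with the topological connecting homomorphism $\delta_{n+1-r}$. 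For $r=1$ this follows from the previous paragraph. For $r\geqslant 2$ I would chase through the exact couple of the skeletal filtration of $Q$: a class in $H_{n+1-r}(Q,\dd Q)$ survives to the $r$-th page iff it admits a relative representative $z$ with $\dd z$ supported on the $(n-r)$-skeleton, and $\difa{Q}^r[z]$, read through the cellular identification of the bottom row with $H_*(\dd Q)$, is the class of $\dd z$, which is by construction $\delta_{n+1-r}[z]$. Combined with the first two paragraphs, this verifies all four items of the proposition.
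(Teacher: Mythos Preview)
The paper does not give its own proof of this proposition: it is quoted from the companion papers \cite{AyV1,AyV2} (see the sentence immediately preceding Proposition~\ref{propSisBuch}), so there is no in-text argument to compare against. Your outline is correct and follows what is essentially the natural strategy one would use for such a result: compute $\E{Q}^1$ from the acyclicity of proper faces, recognise the resulting $L$-shaped support, split off the $p<n$ subcomplex to identify $\Ea{Q}^1$ and $\difa{Q}^1$, and then read off the higher differentials from the exact couple of the skeletal filtration.

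Two minor points are worth tightening. First, invoking Proposition~\ref{propSisBuch} ``inductively'' to see that $\dd F_I$ is a homology sphere is an unnecessary detour; it is more direct to apply Poincar\'e--Lefschetz duality to the acyclic orientable manifold-with-boundary $F_I$, which immediately gives $H_{p+q}(F_I,\dd F_I)\cong H^{-q}(F_I)$ and hence the concentration in $q=0$. Second, in your exact-couple argument for $r\geqslant 2$ you should record explicitly that the inclusion $Q_{n-r}\hookrightarrow \dd Q$ induces a surjection on $H_{n-r}$ (this is where acyclicity of proper faces enters again, since the face decomposition of $\dd Q$ is a homological cell structure); this is what guarantees that every class in $H_{n+1-r}(Q,\dd Q)=\Ea{Q}^r_{n,1-r}$ admits the relative representative $z$ with $\dd z\subset Q_{n-r}$ that you use to evaluate $\difa{Q}^r$.
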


Let $\Lambda_*$ denote the homology module of a torus:
$\Lambda_*=\bigoplus_{s}\Lambda_s$, $\Lambda_s=H_s(T^n)$.

\begin{prop}\label{propEYstruct}
There exists a homological spectral sequence
$\Ea{Y}^r_{p,q}\Rightarrow H_{p+q}(Y)$ such that
\begin{enumerate}
\item $\Ea{Y}^1=H(\E{Y}^1, \difm{Y})$, where the differential
$\difm{Y}\colon \E{Y}^1_{p,q}\to \E{Y}^1_{p-1,q}$ coincides with
$\dif{Q}^1$ for $p<n$, and vanishes otherwise.
\item $\Ea{Y}^r=\E{Y}^r$ for $r\geqslant 2$.
\item $\Ea{Y}^r_{p,q} = \bigoplus_{q_1+q_2=q}\Ea{Q}^r_{p,q_1}\otimes
\Lambda_{q_2}$ and $\difa{Y}^r=\difa{Q}^r\otimes\id_{\Lambda}$ for
$r\geqslant 1$.
\end{enumerate}
\end{prop}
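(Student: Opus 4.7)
My plan is to extract the homological description of $X$ from the orbit-type spectral sequence $\E{X}^r_{*,*}$, used in concert with the morphism $f^r_*\colon \Ea{Y}^r\to \E{X}^r$ induced by the quotient map $f\colon Y\to X$. On each stratum, the fibres of $f$ collapse the subtorus $T_I$ attached to the corresponding face $F_I$, so the K\"unneth factor $\Lambda_*=H_*(T^n)$ appearing in $\Ea{Y}^1$ (Proposition~\ref{propEYstruct}) splits at each face into a part killed by $f_*$ and a complementary part $H_*(T^n/T_I)$ that survives in $\E{X}^1$. First I would combine this with the acyclicity of proper faces of $Q$ to decompose $\E{X}^1_{p,q}$ into three blocks: a diagonal block carrying fundamental classes of face submanifolds, a strip $q=0$, $p<n$ sourced from $H_*(\dd Q)\otimes\Lambda_*$, and a column $p=n$ sourced from $H_*(Q,\dd Q)\otimes\Lambda_*$, in accordance with Proposition~\ref{propEQstruct}.

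Next I would define the face classes $[X_I]\in H_{2(n-|I|)}(X)$ as fundamental classes of face submanifolds and check, using local standardness, that $[X_i]\cap [X_j]$ is transverse and equals $[X_{\{i,j\}}]$ when $\{i,j\}\in S_Q$ and vanishes otherwise, matching the generator multiplication in $\ko[S_Q]$. The linear parameters $\theta_k$ determined by the characteristic function act as zero on face classes; this is the homological counterpart of the vanishing of $\pi^*(H^{>0}(BT^n))$ in $H^*(X)$ from the introduction, and geometrically each one-parameter subgroup orbit bounds a disc in $Y$ whose image in $X$ is a chain with boundary the corresponding linear combination of characteristic submanifolds. This yields a surjection $\ko[S_Q]/\Theta\twoheadrightarrow \langle\text{face classes}\rangle$ whose kernel $W$ is analyzed at the end.

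For the non-face classes I would read off the two remaining blocks. A $k$-cycle $z$ lying in a face $F_I^{\circ}$ (with $k<n$) and swept by the complementary subtorus $T^n/T_I$ of dimension $s=n-|I|$ produces a $(k+s)$-cycle in $X$; the case $s<k$ gives a spine class, and the case where $z$ is a relative $k$-cycle of $(Q,\dd Q)$ with $s\geqslant k$ gives a diaphragm class. By choosing simplicial representatives transverse to the face stratification and pushing them with suitable families of sweeping subtori, I would verify that these assignments are well-defined up to homology and exhaust every non-face survivor on $\E{X}^\infty$.

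For intersections, transversality (from local standardness) yields $[X_I]\cap [X_J]=[X_{I\cup J}]$ when $I\cup J\in S_Q$ and vanishes otherwise, matching face-ring multiplication. A face class intersected with any cycle can be pushed off $X_I$ and then expanded through characteristic submanifolds, showing that proper face classes form an ideal in $(H_*(X),\cap)$. Intersections of spine or diaphragm classes of the form $z_i\times\tau_i$ reduce, after isotopy into general position on $Q$ and on $T^n$, to $(z_1\cap z_2)\times(\tau_1\cap\tau_2)$. The principal obstacle is the identification of $W$: I expect the hardest step to be showing $W\subseteq \soc(\ko[S_Q]/\Theta)$. I would approach this via the auxiliary $T^n$-invariant tubular neighborhood $\wh X$ of $\bigcup_i X_i$ and its internal intersection pairing, thereby recovering the Novik--Swartz socle formula for the Buchsbaum simplicial poset $S_Q$.
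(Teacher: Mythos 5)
Your proposal does not address the statement you were asked to prove. Proposition \ref{propEYstruct} is a statement about the spectral sequence $\Ea{Y}^r$ of the filtered space $Y = Q\times T^n$: it asserts the existence of a modified (``dotted'') spectral sequence $\Ea{Y}^r\Rightarrow H_*(Y)$ whose terms and differentials factor as $\Ea{Q}^r\otimes\Lambda$, coinciding with $\E{Y}^r$ from page $2$ on. Your write-up instead sketches a proof of the survey theorem in the Introduction --- the classification of homology classes of $X$ into face, spine and diaphragm classes, their intersections, and the socle phenomenon --- and in fact it \emph{invokes} Proposition~\ref{propEYstruct} in its first paragraph as a known input (``the K\"unneth factor $\Lambda_*=H_*(T^n)$ appearing in $\Ea{Y}^1$ (Proposition~\ref{propEYstruct})''). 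You have assumed exactly the thing you were supposed to prove, and then gone on to argue about $X$.

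What a correct proof of Proposition~\ref{propEYstruct} needs is essentially nothing about $X$, $f$, characteristic functions, face rings, or intersections. Since $Y_p = Q_p\times T^n$ and the filtration on the torus factor is trivial, the K\"unneth theorem gives $\E{Y}^1_{p,q}=H_{p+q}(Q_p\times T^n,\,Q_{p-1}\times T^n)\cong\bigoplus_{q_1+q_2=q}H_{p+q_1}(Q_p,Q_{p-1})\otimes\Lambda_{q_2}=\bigoplus_{q_1+q_2=q}\E{Q}^1_{p,q_1}\otimes\Lambda_{q_2}$, with $\dif{Y}^1=\dif{Q}^1\otimes\id_{\Lambda}$. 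One then defines $\difm{Y}$ by truncating $\dif{Y}^1$ to columns $p<n$ (exactly as in Proposition~\ref{propEQstruct} for $Q$) and verifies, using the collapse of the column structure of $\Ea{Q}^1$ and the fact that $\otimes\Lambda$ is exact, that the resulting $\Ea{Y}^r$ agrees with $\Ea{Q}^r\otimes\Lambda$ for all $r\geqslant 1$ and with $\E{Y}^r$ for $r\geqslant 2$. The paper itself defers this to \cite{AyV2}, but the argument is a routine tensor-product/K\"unneth comparison of filtered complexes; none of the face-ring, characteristic-function, or intersection-theoretic machinery you develop is relevant here.
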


\begin{prop}\label{propEXstruct}
There exists a homological spectral sequence
$\Ea{X}^r_{p,q}\Rightarrow H_{p+q}(X)$ and the morphism of
spectral sequences $\fa_*^r\colon\Ea{Y}^r_{*,*}\to \Ea{X}^r_{*,*}$
such that:
\begin{enumerate}
\item $\Ea{X}^1=H(\E{X}^1, \difm{X})$ where the differential
$\difm{X}\colon \E{X}^1_{p,q}\to \E{X}^1_{p-1,q}$ coincides with
$\dif{X}^1$ for $p<n$, and vanishes otherwise. The map $\fa_*^1$
is induced by $f_*^1\colon \E{Y}^1\to \E{X}^1$.
\item $\Ea{X}^r=\E{X}^r$ for $r\geqslant 2$.
\item $\E{X}^1_{p,q}=\Ea{X}^1_{p,q}=0$ for $p<q$.
\item $\fa_*^1\colon\Ea{Y}^1_{p,q}\to \Ea{X}^1_{p,q}$ is an
isomorphism for $p>q$ and injective for $p=q$.
\item As a consequence of previous items, for $r\geqslant 1$,
the differentials $\difa{X}^r$ are either isomorphic to
$\difa{Y}^r$ (when they hit the cells with $p>q$), or isomorphic
to the composition of $\difa{Y}^r$ with $\fa_*^r$ (when they hit
the cells with $p=q$), or zero (otherwise).
\item The ranks of diagonal terms at a second page are the $h'$-numbers
of the poset $S_Q$ dual to the orbit space: $\dim
\Ea{X}^2_{q,q}=\dim \E{X}^2_{q,q}=h'_{n-q}(S_Q)$.
\end{enumerate}
\end{prop}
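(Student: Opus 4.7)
The plan is to construct $\Ea{X}^r$ as a perturbation of the orbit-type spectral sequence $\E{X}^r$, in complete parallel with the construction of $\Ea{Q}^r$ and $\Ea{Y}^r$ in Propositions~\ref{propEQstruct} and~\ref{propEYstruct}: truncate the horizontal differential on page one to be zero in column~$n$, and re-inject the missing piece as higher differentials supported on that column. With this setup, items~(1),~(2),~(5) and~(6) become formal manipulations with the underlying exact couple, while the geometric content is concentrated in items~(3) and~(4).

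I first compute $\E{X}^1$ by excision. Over the interior $F_I^{\circ}$ of a face $F_I$ of dimension $p$ the quotient map $f$ is a trivial $T_I$-bundle, so $\mu^{-1}(F_I^{\circ})\cong F_I^{\circ}\times(T^n/T_I)\cong F_I^{\circ}\times T^p$, and K\"unneth combined with the acyclicity of proper faces gives
\[
\E{X}^1_{p,q}\;\cong\;\bigoplus_{\dim F_I=p}H_p(F_I,\dd F_I)\otimes H_q(T^p)\qquad(p<n),
\]
while for $p=n$ the same excision on $X\setminus X_{n-1}\cong Q^{\circ}\times T^n$ gives $\E{X}^1_{n,q}\cong\bigoplus_{k+\ell=n+q}H_k(Q,\dd Q)\otimes\Lambda_\ell$. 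Since $H_q(T^p)=0$ for $q>p$, this proves~(3). The morphism $f_*^1\colon\E{Y}^1\to\E{X}^1$ is the direct sum, over faces $F_I$, of the identity on the face factor tensored with the surjection $\Lambda_q\to H_q(T^p)$ induced by $T^n\to T^n/T_I$; on column $p=n$ it is the identity, and the truncated part of $\dif{X}^1$ coming out of that column matches $\dif{Y}^1$ through $f_*^1$. Defining $\difm{X}$ to agree with $\dif{X}^1$ in columns $p<n$ and to vanish in column $n$, I set $\Ea{X}^1\eqd H(\E{X}^1,\difm{X})$ with $\fa_*^1$ induced by $f_*^1$; this gives~(1). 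Items~(2) and~(5) then follow from the standard comparison of exact couples: the piece of $\dif{X}^1$ lost in the truncation reappears as differentials $\difa{X}^r\colon\Ea{X}^r_{n,1-r}\to\Ea{X}^r_{n-r,0}$ supported on column~$n$, and from page two onward $\Ea{X}^r$ agrees with $\E{X}^r$.

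The heart of the proof is item~(4). For $p<n$, the complex $(\E{Y}^1_{*,q},\difm{Y})$ factors as the face chain complex of $\dd Q$ tensored with $\Lambda_q$, so its homology in degree $p$ is $H_p(\dd Q)\otimes\Lambda_q$. The analogous complex $(\E{X}^1_{*,q},\difm{X})$ has in position $p$ the sum $\bigoplus_{\dim F_I=p}H_p(F_I,\dd F_I)\otimes H_q(T^p)$. A diagram chase -- using that the $(\ast)$-condition produces a local splitting $T_J/T_I\cong S^1$ across every codimension-one face incidence $F_J\subset F_I$ -- shows that for $p>q$ the kernel of each projection $\Lambda_q\to H_q(T^p)$ is precisely the image of $\difm{X}$ coming from the next higher stratum. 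This makes $\fa_*^1$ an isomorphism for $p>q$, and a slight tightening of the same argument yields injectivity on the diagonal $p=q$. Finally, (6) follows by pairing~(4) with item~(3) of Proposition~\ref{propEYstruct} to express $\dim\Ea{X}^2_{q,q}$ in terms of $\dim H_*(\dd Q)$ and the face numbers of $S_Q$, which is precisely Schenzel's formula for $h'_{n-q}(S_Q)$.

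The main obstacle is~(4): the kernels of the torus projections $\Lambda_q\to H_q(T^p)$ are genuinely nontrivial, and the fact that they are exactly cancelled after passing to $\difm{X}$-homology is the geometric crux of the whole description. The acyclicity of proper faces of $Q$ is precisely what makes this cancellation go through, and tracing it requires the local splitting guaranteed by the $(\ast)$-condition at every face incidence.
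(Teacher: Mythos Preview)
The paper does not prove this proposition here; it is quoted from \cite{AyV1,AyV2}, so there is no in-paper argument to compare against directly. Your outline correctly identifies the architecture --- truncate $\dif{X}^1$ in column $n$, reintroduce the missing piece as higher differentials, and reduce everything to the comparison map $\fa_*^1$ --- but the crucial step, item~(4), is not actually established. Writing that ``a diagram chase shows that for $p>q$ the kernel of each projection $\Lambda_q\to H_q(T^p)$ is precisely the image of $\difm{X}$ coming from the next higher stratum'' is not a proof: the kernel of $\Lambda_q\to H_q(T^n/T_I)$ depends on $I$ through the characteristic values $\omega_i$, and what you must show is that the subcomplex of these kernels (as $I$ ranges over $S_Q$, with the induced differential) has vanishing homology in the required range. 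That acyclicity is where the Buchsbaum property of $S_Q$ and the $(\ast)$-condition genuinely enter, and in \cite{AyV2} it is the main technical result, not a routine chase. Your remark about ``local splittings $T_J/T_I\cong S^1$'' points at the right ingredient but does not explain why those local splittings force the global cancellation you claim.

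There is also a slip in your treatment of~(2) and~(5): you write that the reinjected differentials take the form $\difa{X}^r\colon\Ea{X}^r_{n,1-r}\to\Ea{X}^r_{n-r,0}$, but that is the shape of $\difa{Q}^r$ from Proposition~\ref{propEQstruct}, not of $\difa{X}^r$. Column $p=n$ of $\E{X}^1$ carries $\bigoplus_{k+\ell}H_k(Q,\dd Q)\otimes\Lambda_\ell$, spread over many rows, so the higher differentials emanate from every position $(n,q)$ and land at $(n-r,q+r-1)$, not only on row~$0$. This matters for~(5), since those are exactly the differentials that can hit diagonal cells and produce the second-type relations $R'_{\beta,A}$ used later in the paper. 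Likewise, your derivation of~(6) is too quick: item~(4) gives only injectivity of $\fa_*^1$ on the diagonal, so $\Ea{X}^1_{q,q}$ is not determined by $\Ea{Y}^1_{q,q}$, and the identification $\dim\E{X}^2_{q,q}=h'_{n-q}(S_Q)$ is a separate computation (stated as \cite[Th.~3]{AyV2}), not an immediate corollary of Schenzel's formula.
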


Recall the definition of $h'$-numbers.

\begin{defin}\label{definHvectors}
Let $S$ be a pure simplicial poset, $\dim S=n-1$. Let $f_k$ be the
the number of $k$-dimensional simplices in $S$. The array
$(f_{-1}=0,f_0,\ldots,f_{n-1})$ is called the $f$-vector of $S$.
Define $h$-numbers by the relation:
\[
h_0s^n+h_1s^{n-1}+\ldots+h_n =
f_{-1}(s-1)^n+f_0(s-1)^{n-1}+\ldots+f_{n-1}.
\]
Let $\br_k(S)=\dim \Hr_k(S)$. Define $h'$-numbers by the relation
\[
h_k'=h_k+{n\choose
k}\left(\sum_{j=1}^{k-1}(-1)^{k-j-1}\br_{j-1}(S)\right)\mbox{ for
} 0\leqslant k\leqslant n.
\]
\end{defin}

Propositions \ref{propEQstruct}--\ref{propEXstruct} yield the
description of $H_*(X)$. Let $H_{k,l}(Y)$ denote the $\ko$-module
$H_k(Q)\otimes \Lambda_l$. By K\"{u}nneth's formula, $H_j(Y)\cong
\bigoplus_{k+l=j}H_{k,l}(Y)$.

\begin{prop}\label{propHXstruct}
Over a field, there exists a decomposition $H_j(X)\cong
\bigoplus_{k+l=j}H_{k,l}(X)$ and the $\ko$-module homomorphisms
$f_*\colon H_{k,l}(Y)\to H_{k,l}(X)$ with the following
properties:
\begin{enumerate}
\item If $k>l$, then $f_*\colon H_{k,l}(Y)\to
H_{k,l}(X)$ is an isomorphism. In particular, $H_{k,l}(X)\cong
H_k(Q)\otimes\Lambda_l$.

\item If $k<l$, there exists an isomorphism $H_{k,l}(X)\cong H_k(Q,\dd
Q)\otimes \Lambda_l$.

\item If $k<n$, the module $H_{k,k}(X)$ fits in the exact sequence
\[
0\rightarrow\Ea{X}^{\infty}_{k,k}\rightarrow H_{k,k}(X)\rightarrow
H_k(Q,\dd Q)\otimes\Lambda_k \rightarrow 0.
\]

\item $H_{n,n}(X)\cong\ko$.
\end{enumerate}
There holds bigraded Poincare duality: $H_{k,l}(X)\cong
H_{n-k,n-l}(X)$.
\end{prop}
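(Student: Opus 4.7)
The plan is to analyze the spectral sequence $\Ea{X}^r$ cell-by-cell by transporting the K\"unneth bigrading from $\Ea{Y}^r$ through the morphism $\fa_*^r$, and to define $H_{k,l}(X)$ as the $q_2 = l$ contributions to $H_{k+l}(X)$.

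First, the trigrading $(p,q_1,q_2)$ on $\Ea{Y}^r$ coming from Proposition \ref{propEYstruct}(3) is preserved by the differentials $\difa{Y}^r = \difa{Q}^r \otimes \id_\Lambda$. Using Proposition \ref{propEXstruct}(3,4), I transport this grading to $\Ea{X}^r$: in the region $p>q$ the isomorphism $\fa_*^r$ carries it verbatim; on the diagonal I split $\Ea{X}^r_{p,p} = \fa_*^r(\Ea{Y}^r_{p,p}) \oplus W_p^r$ over the field and assign $q_2 = p$ to the complementary summand $W_p^r$, which is consistent because for $p<n$ the module $\Ea{Y}^\infty_{p,p}$ is itself concentrated in $q_2=p$. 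I then define
\[
H_{k,l}(X) := \bigoplus_{p+q = k+l} \bigl(\Ea{X}^\infty_{p,q}\bigr)_{q_2 = l},
\]
so that over a field $H_j(X) \cong \bigoplus_{k+l=j} H_{k,l}(X)$.

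Next I would compute $\Ea{X}^\infty$ by tracking the truncation of differentials. Since the only nonzero differentials in $\Ea{Q}$ are $\difa{Q}^r = \delta_{n+1-r}\colon \Ea{Q}^r_{n,1-r} \to \Ea{Q}^r_{n-r,0}$, Proposition \ref{propEXstruct}(5) implies that within the $q_2=l$ slice of $\Ea{X}$ they survive for $r < n-l$, factor through the injective $\fa_*^{n-l}$ (hence preserve the same kernel) when $r = n-l$, and vanish for $r > n-l$ because their targets would sit above the diagonal. Setting $k = n + q_1$: the column contribution $(\Ea{X}^\infty_{n,k+l-n})_{q_2=l}$ equals $\im(H_k(Q) \to H_k(Q,\dd Q)) \otimes \Lambda_l$ when $k > l$, and the full $H_k(Q,\dd Q) \otimes \Lambda_l$ when $k \leqslant l$. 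By a symmetric incoming-differential argument, the row contribution $(\Ea{X}^\infty_{p,l})_{q_2=l}$ for $p<n$ equals $\im(H_p(\dd Q)\to H_p(Q)) \otimes \Lambda_l$, with an additional summand $W_p$ when $p=l$ lies on the diagonal.

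Assembling contributions to $H_{k,l}(X)$ with $q_2 = l$ then gives the four cases. For $k>l$ the row cell $(k,l)$ and the column cell $(n,k+l-n)$ both sit in the region $p>q$; their direct sum is $H_k(Q) \otimes \Lambda_l$ by the long exact sequence of $(Q,\dd Q)$, giving (1) with $f_* = \fa_*^\infty$ an isomorphism. For $k<l$ only the column cell contributes and yields $H_k(Q,\dd Q) \otimes \Lambda_l$, giving (2). For $k=l<n$ the diagonal cell contributes $\Ea{X}^\infty_{k,k}$ as the sub of the filtration on $H_{2k}(X)$ restricted to $q_2 = k$, and the column cell contributes $H_k(Q,\dd Q) \otimes \Lambda_k$ as the quotient, yielding the exact sequence (3). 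For $k=l=n$ only $(n,n)$ remains, and its class is the fundamental class of the oriented $2n$-manifold $X$, giving (4). Bigraded Poincar\'e duality $H_{k,l}(X) \cong H_{n-k,n-l}(X)$ follows by combining Lefschetz duality $H_k(Q) \cong H_{n-k}(Q,\dd Q)$ with $\Lambda_l \cong \Lambda_{n-l}$: cases (1) and (2) are exchanged, case (3) is self-dual under the swap of sub and quotient (matching $k \leftrightarrow n-k$), and case (4) pairs with $H_{0,0}(X) \cong \ko$ coming from (3) at $k=l=0$ using $H_0(Q,\dd Q) = 0$.

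The main obstacle I expect is the diagonal bookkeeping: correctly identifying the image $\fa_*^\infty(\Ea{Y}^\infty_{k,k}) = \im(H_k(\dd Q)\to H_k(Q)) \otimes \Lambda_k$ inside $\Ea{X}^\infty_{k,k}$ and the complementary spine-class module $W_k$, and verifying that $W_k$ is stable under the spectral sequence differentials so that it persists to $E^\infty$. A secondary subtlety is the compatibility of bigraded Poincar\'e duality with $W_k$, for which one may need a geometric argument (the cap product with the fundamental class respecting the orbit-type filtration) rather than a purely spectral-sequence derivation.
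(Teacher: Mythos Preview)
Your approach is essentially the paper's own: the paper states this proposition as an immediate consequence of Propositions~\ref{propEQstruct}--\ref{propEXstruct} without further argument, and your plan is precisely to unpack that implication by tracking the K\"unneth $q_2$-grading through $\fa_*$ and reading off $H_{k,l}(X)$ from $\Ea{X}^\infty$. The cell-by-cell analysis of the column and row contributions, and the assembly into cases (1)--(4), is correct.

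Two remarks. First, a terminological slip: the complement $W_k$ of $\fa_*^\infty(\Ea{Y}^\infty_{k,k})$ inside $\Ea{X}^\infty_{k,k}$ consists of \emph{face} classes, not spine classes (in the paper's language the entire diagonal $\Ea{X}^\infty_{k,k}$ is spanned by face submanifold classes; spine classes live in $H_{k,l}(X)$ with $k>l$). This does not affect your argument, since you correctly observe that $\Ea{Y}^1_{p,p}$ is already concentrated in $q_2=p$, so the whole diagonal cell carries $q_2=p$ and no splitting of $W_k$ is actually needed to define the bigrading. Your worry about stability of $W_k$ under differentials is therefore moot for the purpose of defining $H_{k,l}(X)$.

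Second, your ``swap of sub and quotient'' sketch for bigraded Poincar\'e duality in case~(3) does not go through as stated: one would need $\Ea{X}^\infty_{k,k} \cong H_{n-k}(Q,\dd Q)\otimes\Lambda_{n-k}$, which is false in general (the left side has dimension governed by $h'$-numbers). You are right to flag this as a genuine subtlety. The clean route is the one you suggest in your final paragraph: use that cap product with $[X]$ is compatible with the orbit-type filtration and with the K\"unneth factor $\Lambda_l \leftrightarrow \Lambda_{n-l}$, so ordinary Poincar\'e duality on $X$ restricts to the bigraded statement. Alternatively, once (1), (2), (4) are established, the dimension equality $\dim H_{k,k}(X)=\dim H_{n-k,n-k}(X)$ follows from ordinary Poincar\'e duality on $X$ together with the already-proved off-diagonal dualities.
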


%%%%%%%%%%%%%%%%%%%%%%%%%%%%%%%%%%%%%%%%%%%%%%%%%%%%%%%%%%%%%%
%
%
%
%
%
%
%
%%%%%%%%%%%%%%%%%%%%%%%%%%%%%%%%%%%%%%%%%%%%%%%%%%%%%%%%%%%%%%

%??Change introduction
\section{Preliminary computations}\label{SecComput}

%%%%%%%%%%%%%%%%%%%%%%%%%%%%%%%%%%%%%%%%%%%%%%%%%%%%%%%%%%%%%%
%
%
%
%%%%%%%%%%%%%%%%%%%%%%%%%%%%%%%%%%%%%%%%%%%%%%%%%%%%%%%%%%%%%%

\subsection{Orientations}

We use the notation $I\less{k}J$ whenever the simplices $I,J\in S$
satisfy $I<J$ and $|J|-|I|=k$. For each pair $I\less{2}J$, there
are exactly two simplices $J'\neq J''$ between them: $I\less{1}
J',J'' \less{1} J$. For every simplicial poset, there exists a
``sign convention'' which means that we can associate an incidence
number $\inc{J}{I}=\pm 1$ to any pair $I\less{1}J\in S$ in such
way that the relation $\inc{J}{J'}\cdot
\inc{J'}{I}+\inc{J}{J''}\cdot \inc{J''}{I}=0$ holds for any
$I\less{2}J$.
%For simplicity, we suppose that $\inc{i}{\minel}=+1$ for any vertex $i$.

The choice of a sign convention is equivalent to the choice of
orientations of all nonempty simplices. By the orientation of a
simplex $I$ in an abstract simplicial poset we mean the rule which
tells whether a given total ordering of the vertices of $I$ is
positive or negative, so that even permutations of the order
preserve the sign and odd permutations change it.
%We suppose that all vertices are positively oriented.
If $I\less{1}J$, then there is exactly one vertex $i$ of $J$ which
is not in $I$. Given the orientations of simplices $I$ and $J$,
and given some positive ordering $i_1<\ldots<i_s$ of the vertices
of $I$, we set $\inc{J}{I}$ to be $+1$ if $i<i_1<\ldots<i_s$ is a
positive ordering on $\ver(J)$, and $-1$ if it is negative. The
construction works in the opposite direction in an obvious way:
incidence signs determine orientations of all simplices by
induction.

Fix arbitrary orientations of the orbit space $Q$ and the torus
$T^n$. Together they define an orientation of $Y=Q\times T^n$ and
$X=Y/\simc$. Also choose an \emph{omniorientation}, which means
the orientations of all characteristic submanifolds $X_{\{i\}}$. A
choice of an omniorientation determines the characteristic values
$\omega_i\in H_1(T^n;\Zo)$ without ambiguity of sign. To perform
explicit calculations with the spectral sequences $\Ea{X}^*$ and
$\Ea{Y}^*$ we also need to orient all faces of $Q$.

\begin{con}
The orientation of a simplex $I\in S_Q$ determines the orientation
of a face $F_I\subset Q$ by the following convention.

Let $i_1,\ldots,i_{n-q}$ be the vertices of $I$, listed in a
positive order. The face $F_I$ lies in the intersection of facets
$F_{i_1},\ldots,F_{i_{n-q}}$. The normal bundles $\nu_{i}$ to
facets $F_{i}$ have natural orientations, in which inward normal
directions are positive. Orient $F_I$ in such way that
$T_xF_I\oplus\nu_{i_1}\oplus\ldots\oplus\nu_{i_{n-q}}\cong T_xQ$
is positive in the orientation of $Q$.
\end{con}

Thus there are distinguished elements $[F_I]\in H_{\dim
F_I}(F_I,\dd F_I)$. By checking the signs one can prove that the
maps
\[
m^0_{I,J}\colon H_{\dim F_I}(F_I,\dd F_I)\to H_{\dim F_J}(F_J,\dd
F_J)
\]
(see \eqref{eqMattachingMap}) send $[F_I]$ to
$\inc{J}{I}\cdot[F_J]$.

The choice of omniorientation and orientations of simplices
determines the orientation of each orbit $T^n/T_I$ by the
following convention.

\begin{con}
Let $i_1,\ldots,i_{n-q}$ be the vertices of $I$, listed in a
positive order. The module $H_1(T^n/T_I)$ is naturally identified
with $\Lambda_1/L_I$, where $L_I$ is a submodule generated by
$\omega_{i_1},\ldots,\omega_{i_{n-q}}\in \Lambda_1=H_1(T^n)$. The
basis $[\gamma_1],\ldots,[\gamma_q]\in H_1(T^n/T_I)$,
$[\gamma_l]=\gamma_l+L_I$ is said to be positive if the basis
$(\omega_{i_1},\ldots,\omega_{i_{n-q}},\gamma_1,\ldots,\gamma_q)$
is positive in $\Lambda_1$. This orientation of $T^n/T_I$
determines a distinguished fundamental cycle $\Omega_I\in
H_q(T^n/T_I)$.
\end{con}

The omniorientation and the orientation of $S$ together determine
the class of each face submanifold: $[X_I]=[F_I]\otimes\Omega_I$.
Note that both orientations $[F_I]$ and $[\Omega_I]$ depend on the
orientation of $I$ by construction. Thus $[X_I]$ does not actually
depend on the sign convention on $S_Q$ and depends only on the
omniorientation.

%%%%%%%%%%%%%%%%%%%%%%%%%%%%%%%%%%%%%%%%%%%%%%%%%%%%%%%%%%%%%%
%
%
%
%%%%%%%%%%%%%%%%%%%%%%%%%%%%%%%%%%%%%%%%%%%%%%%%%%%%%%%%%%%%%%

\subsection{Arithmetics of torus quotients}
Let us fix some coordinate representation of the torus
$T^n=T^{(\{1\})}\times\ldots\times T^{(\{n\})}$, where each
$T^{(\{j\})}$ is a $1$-dimensional torus with a chosen
orientation. For a subset $A=\{j_1<\ldots<j_q\}\subseteq [n]$ we
denote the coordinate subtorus $T^{(\{j_1\})}\times\ldots\times
T^{(\{j_q\})}\subseteq T^n$ by $T^{(A)}$.

The coordinate splitting gives a positive basis $e_1,\ldots,e_n$
of the module $\Lambda_1=H_1(T^n)$, where $e_j$ is the fundamental
class of $T^{(\{j\})}$. For a vertex $i\in \ver(S)$ let
$(\lambda_{i,1},\ldots,\lambda_{i,n})$ denote the coordinates of
$\omega_i\in \Lambda_1$ in this basis.

\begin{lemma}\label{lemmaToricCoefficient}
Let $I\in S_Q$, $I\neq\minel$ be a simplex with the vertices
$\{i_1,\ldots,i_{n-q}\}$ listed in a positive order. Let
$A=\{j_1<\ldots<j_q\}\subset [n]$ be a subset of indices, and let
$e_A=e_{j_1}\wedge\ldots\wedge e_{j_q}\in H_q(T^n;\Zo)$ be the
fundamental class of $T^{(A)}$. Consider the map $\varrho\colon
T^n\to T^n/T_I$. Then $\varrho_*(e_A) = C_{I,A}\Omega_I\in
H_q(T^n/T_I;\Zo)$. The constant $C_{I,A}$ is equal to
\[
\sgn_A\det\left(\lambda_{i,j}\right)_{\begin{subarray}{l} i\in
\{i_1,\ldots,i_{n-q}\}\\j\in [n]\setminus A
\end{subarray}},
\]
where $\sgn_A=\pm1$ depends only on $A\subset[n]$. When $q=0$, the
constant $C_{I,A}$ is equal to $\pm1$ depending on the positivity
of the basis $\omega_{i_1},\ldots,\omega_{i_n}$ and coincides with
the sign of the fixed point of the action.
\end{lemma}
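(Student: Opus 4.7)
The plan is to reduce the computation to a purely algebraic statement about exterior powers of $\Lambda_1$. Recall that for a torus $T^k$, the homology $H_*(T^k;\Zo)$ is canonically the exterior algebra $\bigwedge^* H_1(T^k;\Zo)$ under the Pontryagin product, and this identification is natural with respect to homomorphisms of tori. Applied to $\varrho\colon T^n\to T^n/T_I$, the induced map on $H_*$ is the exterior extension of the quotient map $\Lambda_1\to\Lambda_1/L_I$ on $H_1$, where $L_I=\langle\omega_{i_1},\ldots,\omega_{i_{n-q}}\rangle$. In particular $\varrho_*(e_A)=[e_{j_1}]\wedge\ldots\wedge[e_{j_q}]$ in $\bigwedge^q(\Lambda_1/L_I)\cong H_q(T^n/T_I)\cong\Zo\cdot\Omega_I$.

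Next, I would invoke the $(\ast)$-condition on $\lambda$, which guarantees that $\omega_{i_1},\ldots,\omega_{i_{n-q}}$ extend to an integral basis of $\Lambda_1$. Choose $\gamma_1,\ldots,\gamma_q\in\Lambda_1$ so that the basis $(\omega_{i_1},\ldots,\omega_{i_{n-q}},\gamma_1,\ldots,\gamma_q)$ is positive in $\Lambda_1$; by the orientation convention one then has $\Omega_I=[\gamma_1]\wedge\ldots\wedge[\gamma_q]$. Writing $e_{j_l}=\sum_s c_{l,s}\omega_{i_s}+\sum_m d_{l,m}\gamma_m$ and reducing modulo $L_I$ immediately yields $[e_{j_1}]\wedge\ldots\wedge[e_{j_q}]=\det(d_{l,m})\,\Omega_I$, so it suffices to identify $\det(d_{l,m})$ with $\sgn_A\cdot\det(\lambda_{i,j})_{i\in\ver(I),\,j\in[n]\setminus A}$ for a sign $\sgn_A$ depending only on $A$.

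To do this I would compute the top wedge $\omega_{i_1}\wedge\ldots\wedge\omega_{i_{n-q}}\wedge e_{j_1}\wedge\ldots\wedge e_{j_q}\in\bigwedge^n\Lambda_1$ in two ways. On the one hand, by multilinearity and the fact that the $\omega$'s already appear in the product, only the $\gamma$-parts of each $e_{j_l}$ contribute, so the wedge equals $\det(d_{l,m})\cdot(e_1\wedge\ldots\wedge e_n)$ in view of the positivity of the chosen basis. On the other hand, substituting $\omega_{i_s}=\sum_j\lambda_{i_s,j}e_j$ directly, the only surviving summands are those for which the $\omega$-indices form a bijection onto $[n]\setminus A=\{k_1<\ldots<k_{n-q}\}$, giving $\det(\lambda_{i_s,k_t})_{s,t}\cdot e_{k_1}\wedge\ldots\wedge e_{k_{n-q}}\wedge e_{j_1}\wedge\ldots\wedge e_{j_q}$. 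Shuffling this last wedge into the standard order $e_1\wedge\ldots\wedge e_n$ produces a combinatorial sign $\sgn_A$ depending only on the placement of $A$ inside $[n]$. Equating the two expressions gives exactly the asserted formula for $C_{I,A}$.

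The case $q=0$ is a degenerate instance of the same argument: $A=\varnothing$, $\sgn_A=+1$, and $C_{I,\varnothing}=\det(\lambda_{i,j})_{i\in\ver(I),\,j\in[n]}$, which by the $(\ast)$-condition equals $\pm1$, with value $+1$ precisely when $(\omega_{i_1},\ldots,\omega_{i_n})$ is a positive basis of $\Lambda_1$, i.e.\ the local sign at the fixed point. The only real obstacle in the whole argument is bookkeeping the sign $\sgn_A$ in matching the two expansions of the top wedge; once a positive basis of $\Lambda_1$ is fixed this reduces to routine permutation accounting and is independent of $I$.
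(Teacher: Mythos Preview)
Your argument is correct and complete; the only difference from the paper is the mechanism used to identify the minor. The paper writes $e_A$ in the basis $(b_l)=(\omega_{i_1},\ldots,\omega_{i_{n-q}},\gamma_1,\ldots,\gamma_q)$ via the inverse change-of-basis matrix $\mathcal{V}=\mathcal{U}^{-1}$, observes that after passing to $\Lambda_1/L_I$ only the minor $\det(\mathcal{V}_{j,\alpha})_{j\in A,\ \alpha\in\{n-q+1,\ldots,n\}}$ survives, and then invokes Jacobi's complementary-minor identity to rewrite this as (up to a sign depending only on $A$) the minor $\det(\lambda_{i,j})_{i\in\ver(I),\ j\in[n]\setminus A}$ of $\mathcal{U}$. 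You instead compute the top form $\omega_{i_1}\wedge\cdots\wedge\omega_{i_{n-q}}\wedge e_{j_1}\wedge\cdots\wedge e_{j_q}$ in two ways, which is precisely a direct proof of the special case of Jacobi's identity needed here and makes the argument self-contained without an external citation. Both routes produce the same $\sgn_A$ (the shuffle sign of $([n]\setminus A,\,A)$ relative to the standard order), and your handling of the $q=0$ case matches the paper's.
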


\begin{proof}
When $q=0$ the statement is simple. Let $q\neq 0$. Choose vectors
$\gamma_1,\ldots,\gamma_q$ so that $(b_l) =
(\omega_{i_1},\ldots,\omega_{i_{n-q}},\gamma_1,\ldots,\gamma_q)$
is a positive basis of the lattice $H_1(T^n,\Zo)$. Thus
$b_l=\mathcal{U}e_l$ with the matrix $\mathcal{U}$ of the form
\[
\mathcal{U}=\begin{pmatrix}
\lambda_{i_1,1}&\cdots& \lambda_{i_{n-q},1} & * &\cdots & *\\
\lambda_{i_1,2}&\cdots& \lambda_{i_{n-q},2} & * &\cdots & *\\
\vdots &\ddots & \vdots & \vdots & \ddots & \vdots\\
\lambda_{i_1,n}&\cdots& \lambda_{i_{n-q},n} & * &\cdots & *
\end{pmatrix}
\]
We have $\det \mathcal{U}=1$ since both bases are positive.
Consider the inverse matrix $\mathcal{V}=\mathcal{U}^{-1}$. Thus
\[
e_A=e_{j_1}\wedge\ldots\wedge
e_{j_q}=\sum_{M=\{\alpha_1<\ldots<\alpha_q\}\subset[n]}
\det\left(\mathcal{V}_{j,\alpha}\right)_{\begin{subarray}{l} j\in
A\\\alpha\in M \end{subarray}}b_{\alpha_1}\wedge\ldots\wedge
b_{\alpha_q}.
\]
After taking the quotient
$\Lambda/\langle\omega_{i_1},\ldots,\omega_{i_{n-q}}\rangle$ all
summands with $M\neq\{n-q+1,\ldots,n\}$ vanish. When
$M=\{n-q+1,\ldots,n\}$, the element $b_{n-q-1}\wedge\ldots\wedge
b_{n} = \gamma_1\wedge\ldots \wedge\gamma_q$ goes to $\Omega_I$.
Thus
\[
C_{I,A} =
\det\left(\mathcal{V}_{j,\alpha}\right)_{\begin{subarray}{l}j\in
A\\\alpha\in\{n-q+1,\ldots,n\}\end{subarray}}.
\]
Now apply Jacobi's identity (see e.g. \cite[Sect.4]{BruShn}):
\[
\det\left(\mathcal{V}_{j,\alpha}\right)_{\begin{subarray}{l} j\in
A\\\alpha\in\{n-q+1,\ldots,n\}\end{subarray}}=
\frac{(-1)^{\sgn}}{\det \mathcal{U}}
\det\left(\mathcal{U}_{r,s}\right)_{\begin{subarray}{l}r\in
\{1,\ldots,n-q\}\\s\in [n]\setminus A\end{subarray}}
\]
where $\sgn=\sum_{r=1}^{n-q}r +\sum_{s\in [n]\setminus A}s$. Since
first $n-q$ columns of $\mathcal{U}$ are exactly the vectors
$\lambda_{i,j}$, this observation completes the proof .
\end{proof}

%%%%%%%%%%%%%%%%%%%%%%%%%%%%%%%%%%%%%%%%%%%%%%%%%%%%%%%%%%%%%%
%
%
%
%%%%%%%%%%%%%%%%%%%%%%%%%%%%%%%%%%%%%%%%%%%%%%%%%%%%%%%%%%%%%%

\subsection{Face ring and linear system of parameters}

Recall the definition of a face ring of a simplicial poset $S$
(see \cite{StPos} or \cite{BPposets}). For $I_1,I_2\in S$ let
$I_1\vee I_2$ denote the set of least upper bounds, and $I_1\cap
I_2\in S$ be the intersection of simplices (the intersection is
well-defined and unique in the case when $I_1\vee
I_2\neq\emptyset$).

\begin{defin}
The face ring $\ko[S]$ is the quotient ring of $\ko[v_I\mid I\in
S]$, $\deg v_I = 2|I|$ by the relations
\[
v_{I_1}\cdot v_{I_2}=v_{I_1\cap I_2}\cdot\sum_{J\in I_1\vee
I_2}v_J,\qquad v_{\emptyset}=1.
\]
The sum over an empty set is assumed to be $0$.
\end{defin}

Let $[m]=\{1,\ldots,m\}$ be the set of vertices of $S$ and let
$\ko[m]=\ko[v_1,\ldots,v_m]$ be the graded polynomial ring with
$\deg v_i = 2$. The ring homomorphism $\ko[m]\to \ko[S]$ sending
$v_i$ to $v_i$ defines a structure of $\ko[m]$-module on $\ko[S]$.

A characteristic function on $Q$ determines the set of linear
forms $\{\theta_1,\ldots,\theta_n\}\subset \ko[S_Q]$, where
$\theta_j=\sum_{i\in\ver(S_Q)} \lambda_{i,j}v_i$. If $J\in S$ is a
maximal simplex, $|J|=n$, then
\begin{equation}\label{eqMinorCharMap}
\mbox{the matrix }(\lambda_{i,j})_{\begin{subarray}{l} i\leqslant
J\\j\in [n]\end{subarray}}\mbox{ is invertible over }\ko
\end{equation}
by the $(\ast)$-condition. This condition is equivalent to the
statement that the sequence $\{\theta_1,\ldots,\theta_n\}$ is a
linear system of parameters in $\ko[S]$ (see
e.g.\cite[Lm.3.5.8]{BPnew}). It generates an ideal
$(\theta_1,\ldots,\theta_n)\subset \ko[S]$ denoted by $\Theta$.

A face ring $\ko[S]$ is an algebra with straightening law (see,
e.g. \cite[\S.3.5]{BPnew}). As a $\ko$-module, it has an additive
basis
\[
\{P_\sigma=v_{I_1}\cdot v_{I_2}\cdot\ldots\cdot v_{I_t}\mid
\sigma=(I_1\leqslant I_2\leqslant\ldots\leqslant I_t\in S)\}.
\]

\begin{lemma}\label{lemmaFaceRingBasis}
The elements $[v_I]=v_I+\Theta$ span the $\ko$-module
$\ko[S]/\Theta$.
\end{lemma}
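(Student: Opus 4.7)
Let $V\subseteq\ko[S]/\Theta$ denote the $\ko$-submodule spanned by the classes $[v_I]$, $I\in S$. The plan is to establish the stronger claim that $V$ is closed under multiplication modulo $\Theta$. Since $V$ contains $1=[v_\emptyset]$ and all algebra generators $[v_I]$, closure under products will force $V=\ko[S]/\Theta$. Via the face ring relation $v_Iv_K=v_{I\cap K}\sum_{M\in I\vee K}v_M$, the product $[v_I][v_K]$ automatically lies in $V$ when $I\cap K=\emptyset$; otherwise the problem reduces to showing that $[v_Lv_M]\in V$ whenever $L\leq M$ in $S$ and $L\neq\emptyset$. I establish this reduced claim by induction on $|L|$.

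For the base case $|L|=1$, with $L=\{i\}$, I expand the l.s.o.p. identity $\theta_jv_M\equiv 0\pmod\Theta$ as $\sum_{i'}\lambda_{i',j}v_{i'}v_M\equiv 0$. The contributions from vertices $i'\notin\ver(M)$ already lie in $V$, because $v_{i'}v_M=\sum_{M'\in\{i'\}\vee M}v_{M'}$ by the face ring relation. The remaining coefficient matrix $(\lambda_{i',j})_{i'\in\ver(M),\,j\in[n]}$ is a submatrix of the invertible matrix in \eqref{eqMinorCharMap} associated with any maximal simplex $\tilde J\geq M$, and therefore has full row rank $|M|$; inverting it expresses each $v_{i'}v_M$ for $i'\in\ver(M)$ as an element of $V$ modulo $\Theta$.

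For the inductive step $|L|\geq 2$, I pick a vertex $i\in\ver(L)$ and let $L_0\in S$ be the codimension-one subface of $L$ with vertex set $\ver(L)\setminus\{i\}$. The face ring relation gives $v_{L_0}v_i=\sum_{N\in L_0\vee\{i\}}v_N$, and since $L$ itself is a minimal upper bound of $L_0$ and $\{i\}$, I can write $v_L=v_{L_0}v_i-\sum_{N\neq L}v_N$, whence
\[
v_Lv_M \;=\; v_{L_0}(v_iv_M)\;-\;\sum_{N\neq L}v_Nv_M.
\]
Using the base case, $v_iv_M\equiv\sum_{K'}c_{K'}v_{K'}\pmod\Theta$, so the first term becomes a combination of products $v_{L_0}v_{K'}$; each such product is handled either directly by the inductive hypothesis (when $L_0\leq K'$) or, via $v_{L_0}v_{K'}=v_{L_0\cap K'}\sum_{M'\in L_0\vee K'}v_{M'}$, by the inductive hypothesis applied to $v_{L_0\cap K'}v_{M'}$, since $L_0\cap K'\leq L_0\leq M'$ and $|L_0\cap K'|<|L_0|<|L|$. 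For each spurious summand $v_Nv_M$ with $N\neq L$, the equality $\ver(N)=\ver(L)\subseteq\ver(M)$ together with the uniqueness of $L$ as the face of $M$ with vertex set $\ver(L)$ forces $N\not\leq M$; hence $|N\cap M|<|L|$ and the same face-ring reduction sends this term to products covered by the inductive hypothesis.

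The main obstacle is the inductive step: writing $v_L$ as a vertex product via $v_{L_0}v_i$ introduces the spurious minimal upper bounds $N\neq L$, which exist precisely because simplices in a general simplicial poset need not be determined by their vertex sets. Controlling these terms — by observing that each such $N$ is incomparable with $M$, so that the face ring relation automatically decreases the rank of the lower simplex — is the crux of the argument.
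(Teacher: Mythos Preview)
Your proof is correct and shares the paper's core mechanism: peel a vertex off the smaller simplex and use the linear system of parameters to rewrite $v_iv_M$ as a combination of $v_{K'}$ with $K'\gess{1}M$. The organisation differs: the paper works directly with the straightening-law basis $P_\sigma=v_{I_1}\cdots v_{I_t}$ (with $I_1\leqslant\cdots\leqslant I_t$) and descends on the pair $(t,|I_1|)$, whereas you first reduce to products $v_Lv_M$ with $L\leqslant M$ and induct on $|L|$. Two of your case distinctions are in fact vacuous, and noticing this recovers the paper's shorter argument. First, in the base-case expansion $v_iv_M\equiv\sum c_{K'}v_{K'}$ every $K'$ that appears satisfies $K'>M\geqslant L_0$, so the ``or'' branch for $v_{L_0}v_{K'}$ never fires. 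Second, for the spurious $N\neq L$ with $\ver(N)=\ver(L)$, your observation that $N\not\leqslant M$ actually upgrades to $N\vee M=\emptyset$: any common upper bound $K$ of $N$ and $M$ would have both $N$ and $L$ as faces of $[\minel,K]$ with the same vertex set, forcing $N=L$. Hence $v_Nv_M=0$ outright --- this is precisely the paper's one-line remark ``for every $J\in i\vee(I_1\setminus i)$ except $I_1$, the product $v_J\cdot v_{I_2}$ vanishes.'' Your inductive treatment of these terms is therefore correct but unnecessary; recognising the vanishing also sidesteps the minor awkwardness that $N\cap M$ is, strictly speaking, undefined when $N\vee M=\emptyset$.
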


\begin{proof}
Take any element $P_\sigma$ with $|\sigma|\geqslant 2$. Using
relations in the face ring, we can express
$P_{\sigma}=v_{I_1}\cdot \ldots\cdot v_{I_t}$ as $v_{i}\cdot
v_{I_1\setminus i}\cdot \ldots\cdot v_{I_t}$ for a vertex
$i\leqslant I_1$. Indeed, for every $J\in i\vee (I_1\setminus i)$
except $I_1$, the product $v_J\cdot v_{I_2}$ vanishes.

The element $v_i$ can be expressed as $\sum_{i'\nleq
I_t}a_{i'}v_{i'}$ modulo $\Theta$ according to
\eqref{eqMinorCharMap} (exclude all $v_i$ corresponding to the
vertices of some maximal simplex $J\geqslant I_t$). Thus
$v_iv_{I_t}$ is expressed as a combination of $v_{I_t'}$ with
$I_t'\gess{1}I_t$. Therefore, up to ideal $\Theta$, the element
$P_{\sigma}$ is expressed as a linear combination of elements
$P_{\sigma'}$ which have either smaller length $t$ (in case
$|I_1|=1$) or smaller $I_1$ (in case $|I_1|>1$). By iterating this
descending process, we express the element $P_{\sigma}+\Theta\in
\ko[S]/\Theta$ as a linear combination of $[v_I]$.
\end{proof}

%Note that the proof works for any $\ko$ including $\Zo$.

%%%%%%%%%%%%%%%%%%%%%%%%%%%%%%%%%%%%%%%%%%%%%%%%%%%%%%%%%%%%%%
%
%
%
%
%
%
%
%%%%%%%%%%%%%%%%%%%%%%%%%%%%%%%%%%%%%%%%%%%%%%%%%%%%%%%%%%%%%%

\section{Linear relations on face classes}\label{SecFaceSubmfds}

Let $H^*_T(X)$ be a $T^n$-equi\-va\-ri\-ant cohomology ring of
$X$. Any proper face of $Q$ is acyclic, therefore any face has a
vertex. Hence, there is an injective homomorphism
\[
\ko[S_Q]\hookrightarrow H_T^*(X),
\]
which sends $v_I$ to the cohomology class, equivariant Poincare
dual to $[X_I]$ (see \cite[Lm.6.4]{MasPan}). The inclusion of a
fiber in the Borel construction, $X\to X\times_TET^n$, induces the
ring homomorphism $H_T^*(X)\to H^*(X)$. The subspace $H_*(X)$,
Poincare dual to the image of the homomorphism
\begin{equation}\label{eqFaceToCohomology}
g\colon\ko[S_Q]\hookrightarrow H_T^*(X)\to H^*(X)
\end{equation}
is generated by the elements $[X_I]$, thus coincides with the
submodule $\bigoplus_q\Ea{X}^{\infty}_{q,q}\subset H_*(X)$. We
call the classes $[X_I]$ the \emph{face classes (or face cycles)}
of $X$.

Note that the elements $[X_I]=[F_I]\otimes\Omega_I$ can also be
considered as the free generators of the $\ko$-module
\[
\bigoplus_q\E{X}^1_{q,q}=\bigoplus_q\bigoplus_{|I|=n-q}
H_q(F_I,\dd F_I)\otimes H_q(T^n/T_I).
\]
In the following let $\langle[X_I]\rangle$ denote the free
$\ko$-module generated by the elements~$[X_I]$, $I\in S_Q$.

\begin{prop}\label{propTwoTypesOfRels}
Let $C_{I,A}$ be the constants defined in Lemma
\ref{lemmaToricCoefficient}. The submodule of $H_*(X)$ generated
by the face classes $[X_I]$ has relations of the following two
types:
\begin{enumerate}

\item For each $J\in S$, $|J|=n-q-1$, and $A\subset [n]$, $|A|=q$
there is a relation $R_{J,A}=0$ where
\[
R_{J,A}=\sum_{I, I\gess{1}J}\inc{I}{J}C_{I,A}[X_I].
\]

\item Let $q\leqslant n-2$ and let $\beta\in H_q(\dd Q)$ be a
homology class lying in the image of the connecting homomorphism
$\delta_{q+1}\colon H_{q+1}(Q,\dd Q)\to H_q(\dd Q)$. Let
$\sum_{I,|I|=n-q}B_I[F_I]$ be a cellular chain representing
$\beta$ (such representation exists since every face of $\dd Q$ is
acyclic, thus may be considered as a homological cell). Then, for
each $A\subset[n]$, $|A|=q$ we have a relation $R'_{\beta,A}=0$,
where
\[
R'_{\beta,A}=\sum_{I, |I|=n-q}B_IC_{I,A}[X_I].
\]
\end{enumerate}
\end{prop}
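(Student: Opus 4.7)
The strategy is to identify all relations on face classes by reading off the incoming boundaries on the diagonal of the spectral sequence $\Ea{X}^\bullet$ of Proposition~\ref{propEXstruct}. The classes $[X_I]=[F_I]\otimes\Omega_I$ form a basis of $\bigoplus_q\E{X}^1_{q,q}$, and since the outgoing differential from the diagonal vanishes (Proposition~\ref{propEXstruct}(3) gives $\E{X}^1_{q-1,q}=0$), relations come precisely from elements of $\bigoplus_q\E{X}^1_{q,q}$ that, at some stage of $\Ea{X}^\bullet$, become boundaries. Item~(5) of Proposition~\ref{propEXstruct} identifies every differential landing on the diagonal as a composition $\fa_*^r\circ\difa{Y}^r$, and Proposition~\ref{propEYstruct}(3) rewrites $\difa{Y}^r$ in K\"unneth form as $\difa{Q}^r\otimes\id_\Lambda$. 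Proposition~\ref{propEQstruct} then lists all nonzero components of $\difa{Q}^r$: the cellular boundary $\dif{Q}^1$ for $p<n$ (which enters via $\Ea{Q}^1=H(\E{Q}^1,\difm{Q})$), plus, for each $r\geq1$, a single additional component $\difa{Q}^r=\delta_{n+1-r}\colon H_{n+1-r}(Q,\dd Q)\to H_{n-r}(\dd Q)$. Finally, Lemma~\ref{lemmaToricCoefficient} converts $f_*^1$ on basis elements via $f_*^1([F_I]\otimes e_A)=C_{I,A}[X_I]$.

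For type~(1) relations with $|J|\geq1$, I take the basis element $[F_J]\otimes e_A$ of $\E{Y}^1_{q+1,q}$ (where $|J|=n-q-1\geq1$, $|A|=q$). Proposition~\ref{propEXstruct}(4) ensures $\fa_*^1$ is an isomorphism for $p>q$, so this element corresponds to a class in $\E{X}^1_{q+1,q}$. Its image under $\dif{Y}^1=\dif{Q}^1\otimes\id_\Lambda$ is $\sum_{I\gess{1}J}\inc{I}{J}[F_I]\otimes e_A$, using the formula $m^0_{J,I}([F_J])=\inc{I}{J}[F_I]$ recorded after~\eqref{eqMattachingMap}. Pushing forward by $f_*^1$ and applying Lemma~\ref{lemmaToricCoefficient} produces exactly $R_{J,A}$. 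This lies in the image of $\difm{X}$ and therefore vanishes already in $\Ea{X}^1_{q,q}$, hence in $H_*(X)$. The degenerate case $J=\minel$ (where $q=n-1$) is handled by the extra $r=1$ component: starting from $[Q,\dd Q]\otimes e_A\in H_n(Q,\dd Q)\otimes\Lambda_{n-1}\subset\Ea{Y}^1_{n,n-1}$, the differential $\difa{Q}^1=\delta_n$ sends $[Q,\dd Q]$ to $[\dd Q]=\sum_i\inc{i}{\minel}[F_i]$, whose $f_*^1$-image is $R_{\minel,A}$, again a boundary in $\Ea{X}^\bullet$.

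For type~(2) relations I use the higher differentials $\difa{X}^r$ with $r=n-q\geq2$ (i.e.\ $q\leq n-2$). By Proposition~\ref{propEXstruct}(5), the component landing in $(q,q)$ factors as $\fa_*^r\circ(\delta_{q+1}\otimes\id_{\Lambda_q})$ acting on the summand $H_{q+1}(Q,\dd Q)\otimes\Lambda_q$ of $\Ea{Y}^r_{n,2q-n+1}$. Given $\alpha\in H_{q+1}(Q,\dd Q)$ and $\beta:=\delta_{q+1}(\alpha)\in H_q(\dd Q)$, acyclicity of proper faces lets me write $\beta=\sum_{|I|=n-q}B_I[F_I]$ as a cellular chain. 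Tracing $\alpha\otimes e_A$ through the differential and then pushing forward by $\fa_*^r$ (which at the diagonal factors through $f_*^1$) gives $\sum_I B_IC_{I,A}[X_I]=R'_{\beta,A}$. Since this is a boundary in $\Ea{X}^\bullet$, it vanishes in $H_*(X)$.

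The main technical hurdle will be the consistent bookkeeping of signs. The incidences $\inc{I}{J}$ depend on the sign convention on $S_Q$; the orientations of $[F_I]$ and the fundamental classes $\Omega_I$ of the quotient tori depend on the omniorientation together with the ordering of the vertices of $I$; and the $C_{I,A}$ are the determinantal signs from Lemma~\ref{lemmaToricCoefficient}. Reconciling these four conventions so that the spectral-sequence boundaries come out exactly as $\sum\inc{I}{J}C_{I,A}[X_I]$ and $\sum B_IC_{I,A}[X_I]$ — with $B_I$ normalized against the chosen topological $\delta_{q+1}$ — is the main bookkeeping step. Once this is done, the tensor identity $\difa{Y}^r=\difa{Q}^r\otimes\id_\Lambda$ and the $E^1$-isomorphism $\fa_*^1$ above the diagonal reduce the entire derivation to a routine algebraic computation.
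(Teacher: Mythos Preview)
Your proposal is correct and follows essentially the same approach as the paper: identify relations on the diagonal $\bigoplus_q\E{X}^1_{q,q}$ as images of incoming differentials, compute the first differential via $\dif{Q}^1\otimes\id_\Lambda$ together with Lemma~\ref{lemmaToricCoefficient} to obtain the $R_{J,A}$, and identify the higher differentials with $\delta_{q+1}\otimes\id_\Lambda$ pushed forward by $f_*$ to obtain the $R'_{\beta,A}$. The only cosmetic differences are that you route the type~(1) computation through $\E{Y}^1$ before applying $f_*^1$ (the paper works directly in $\E{X}^1$) and that you single out the case $J=\minel$ explicitly, whereas the paper treats all $J$ uniformly via $\dif{X}^1$.
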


\begin{proof}
The proof follows from the structure of homological spectral
sequences of $X$ and $Y$. The module $\bigoplus_q\E{X}^1_{q,q}$ is
freely generated by $[X_I]$. Relations on $[X_I]$ in $H_*(X)$
appear as the images of the differentials hitting
$\bigoplus_q\E{X}^1_{q,q}$. The relation of first type $R_{J,A}$
is the image of the generator
\[
[F_J]\otimes [T^{(A)}]\in H_{q+1}(F_J,\dd F_J)\otimes
H_q(T^n/T_J)\subset \E{X}^1_{q+1,q}
\]
under the differential $\dif{X}^1\colon \E{X}^1_{q+1,q}\to
\E{X}^1_{q,q}$. Thus relations of the first type span the image of
the first differentials hitting $\E{X}^1_{q,q}$.

Let us prove that images of higher differentials are generated by
$R'_{\beta,A}$. Higher differentials $\dif{Q}^{\geqslant 2}$
coincide with $\delta_{*+1}\colon H_{*+1}(Q,\dd Q)\to H_*(\dd Q)$
by Proposition \ref{propEQstruct}. The differentials
$\dif{Y}^{\geqslant 2}$ coincide with
$\delta_{*+1}\otimes\id_{\Lambda}$ by Proposition
\ref{propEYstruct}. Thus the image of $\dif{Y}^{\geqslant 2}$ in
$\E{Y}^2_{q,q}$ is generated by the elements $\beta\otimes
[T^{(A)}]$, which are, in turn, the homology classes of the
elements
\[
\left(\sum_{I, |I|=n-q}B_I[F_I]\right)\otimes[T^{(A)}]\in
\E{Y}^1_{q,q}.
\]
By Proposition \ref{propEXstruct}, the differential $\dif{X}^*$
which hits $\E{X}^*_{q,q}$, coincides with the composition of
$\dif{Y}^*$ and inclusion $f_*^2$. The map $f_*^1\colon
\E{Y}^1_{q,q}\to\E{X}^1_{q,q}$ is the sum of the maps
\[
\id\otimes\varrho\colon H_q(F_I,\dd F_I)\otimes H_q(T^n)\to
H_q(F_I,\dd F_I)\otimes H_q(T^n/T_I)
\]
over all simplices $I$ of rank $n-q$. Thus
\[
f_*^2(\beta\otimes [T^{(A)}])=\left[f_*^1\left(\left(\sum_{I,
|I|=n-q} B_I[F_I]\right)\otimes[T^{(A)}]\right)\right]
=R'_{\beta,A}
\]
by Lemma \ref{lemmaToricCoefficient}.
\end{proof}

\begin{rem}
It follows from the spectral sequence that the element
$R'_{\beta,A}\in \bigoplus_q\E{X}^2_{q,q}$ does not depend on a
cellular chain, representing $\beta$. Proposition
\ref{propEXstruct} also implies that relations $\{R'_{\beta,A}\}$
are linearly independent in $\E{X}^2_{q,q}$ when $\beta$ runs over
some basis of $\im\delta_{q+1}$ and $A$ runs over all subsets of
$[n]$ of cardinality $q$.
\end{rem}

Next we want to check that relations of the first type are exactly
the relations in the quotient ring $\ko[S_Q]/\Theta$.

\begin{prop}\label{propIsomQuotients}
Let $\varphi\colon\langle[X_I]\rangle\to \ko[S_Q]$ be the degree
reversing linear map, which sends the generator $[X_I]$ to $v_I$.
Then $\varphi$ descends to the isomorphism of $\ko$-modules
\[
\tilde{\varphi}\colon \langle[X_I]\rangle/\langle
R_{J,A}\rangle\to \ko[S_Q]/\Theta.
\]
\end{prop}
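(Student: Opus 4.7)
The plan is to construct $\tilde\varphi$ as a well-defined, degree-reversing module map, to obtain surjectivity from Lemma~\ref{lemmaFaceRingBasis}, and to deduce injectivity by comparing graded ranks computed via the spectral sequence.

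First I would check that $\varphi(R_{J,A})\in\Theta$ for every $J\in S_Q$ with $|J|=n-q-1$ and every $A\subset[n]$ with $|A|=q$. Since each $I\gess{1}J$ has $\ver(I)=\ver(J)\cup\{i_I\}$ for a unique new vertex~$i_I$, Laplace expansion of the determinant from Lemma~\ref{lemmaToricCoefficient} along the $i_I$-row (with orientations chosen so that the row-permutation sign absorbs $\inc{I}{J}$) gives
\[
\inc{I}{J}\,C_{I,A}=\sum_{k\in[n]\setminus A}\alpha_k\,\lambda_{i_I,k},
\]
where the $\alpha_k$ are signed $(n-q-1)\times(n-q-1)$ minors of $(\lambda_{i,j})_{i\in\ver(J),\,j\in[n]\setminus A}$ depending on $J,A,k$ but not on $I$. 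The face-ring identity $v_Jv_i=\sum_{K\in J\vee\{i\}}v_K$ for $i\nleq J$ yields $\sum_{I\gess{1}J}\lambda_{i_I,k}v_I=v_J\theta_k-v_J\sum_{i\leqslant J}\lambda_{i,k}v_i$, so summing over $k$ and using $v_J\theta_k\in\Theta$,
\[
\varphi(R_{J,A})\equiv -v_J\sum_{i\leqslant J}v_i\Bigl(\sum_{k\in[n]\setminus A}\alpha_k\lambda_{i,k}\Bigr)\pmod{\Theta}.
\]
For every $i\in\ver(J)$ the inner parenthesised sum is, up to an overall sign, the Laplace expansion of an $(n-q)\times(n-q)$ determinant whose additional row duplicates one of the rows of $(\lambda_{i',j})_{i'\in\ver(J),\,j\in[n]\setminus A}$, and therefore vanishes.

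Surjectivity of $\tilde\varphi$ is immediate from Lemma~\ref{lemmaFaceRingBasis}. For injectivity I would compare graded ranks via the spectral sequence. By Proposition~\ref{propEXstruct}(3), $\E{X}^1_{q-1,q}=0$, so the outgoing differential from $\E{X}^1_{q,q}$ is zero, and Proposition~\ref{propTwoTypesOfRels} identifies the degree-$2q$ part of $\langle[X_I]\rangle/\langle R_{J,A}\rangle$ with $\E{X}^2_{q,q}$. Proposition~\ref{propEXstruct}(6) then gives $\dim\E{X}^2_{q,q}=h'_{n-q}(S_Q)$, and by Proposition~\ref{propSisBuch} together with the definition of the $h'$-vector this equals $\dim(\ko[S_Q]/\Theta)_{2(n-q)}$. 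A degree-reversing surjection between graded modules of equal finite-dimensional rank in each degree must be an isomorphism.

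The main obstacle is the well-definedness step. The difficulty lies in reconciling the incidence signs $\inc{I}{J}$ built into $R_{J,A}$ with the sign conventions of the Laplace expansion; the cancellation of the residual expression hinges on recognising it as a signed sum of determinants with a repeated row, which is the central combinatorial identity behind the argument.
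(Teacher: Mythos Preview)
Your proposal is correct and follows essentially the same route as the paper: establish $\varphi(R_{J,A})\in\Theta$ via a Laplace/cofactor expansion and the vanishing of determinants with a repeated row, get surjectivity from Lemma~\ref{lemmaFaceRingBasis}, and conclude by comparing graded ranks through $\dim\E{X}^2_{q,q}=h'_{n-q}(S_Q)$. One small refinement: the equality $h'_{n-q}(S_Q)=\dim(\ko[S_Q]/\Theta)_{2(n-q)}$ is not literally Definition~\ref{definHvectors} but Schenzel's theorem for Buchsbaum posets (cited explicitly in the paper's proof), so you should invoke that result together with Proposition~\ref{propSisBuch} rather than ``the definition of the $h'$-vector''.
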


\begin{proof}
(1) First we prove that $\tilde{\varphi}$ is well defined by
showing that the element
\[
\varphi(R_{J,A})=\sum\nolimits_{I, I\gess{1}J}
\inc{I}{J}C_{I,A}v_I\in\ko[S_Q]
\]
lies in $\Theta$. Let $s=|J|$ and, consequently, $|I|=s+1$,
$|A|=n-s-1$. Let $[n]\setminus A=\{\alpha_1<\ldots<\alpha_{s+1}\}$
and let $\{j_1,\ldots,j_s\}$ be the vertices of $J$ listed in a
positive order. Consider the $s\times(s+1)$ matrix:
\[
\mathcal{D}=\begin{pmatrix}
\lambda_{j_1,\alpha_1}&\ldots& \lambda_{j_1,\alpha_{s+1}}\\
\vdots &\ddots & \vdots\\
\lambda_{j_s,\alpha_1}&\ldots& \lambda_{j_s,\alpha_{s+1}}
\end{pmatrix}
\]
Denote by $\mathcal{D}_l$ the square submatrix obtained from
$\mathcal{D}$ by deleting $l$-th column and let
$a_l=(-1)^{l+1}\det \mathcal{D}_l$. We claim that
\[
\varphi(R_{J,A}) = \pm
v_J\cdot(a_1\theta_{\alpha_1}+\ldots+a_{s+1}\theta_{\alpha_{s+1}}).
\]
Indeed, after expanding each $\theta_l$ as $\sum_{i\in
\ver(S)}\lambda_{i,l}v_i$, all elements of the form $v_Jv_{i}$
with $i<J$ cancel (the coefficients at these terms are
determinants of matrices with two coinciding rows). Other terms
give
\[
\sum_{I,I\gess{1}J, i=I\setminus
J}(a_1\lambda_{i,\alpha_1}+\ldots+a_{s+1}\lambda_{i,\alpha_{s+1}})v_I.
\]
The coefficient at $v_I$ is equal to the determinant of the matrix
\begin{equation}\label{eqMatrixBig}
\begin{pmatrix}
\lambda_{i,\alpha_1}&\ldots& \lambda_{i,\alpha_{s+1}}\\
\lambda_{j_1,\alpha_1}&\ldots& \lambda_{j_1,\alpha_{s+1}}\\
\vdots &\ddots & \vdots\\
\lambda_{j_s,\alpha_1}&\ldots& \lambda_{j_s,\alpha_{s+1}}
\end{pmatrix}
\end{equation}
by the cofactor expansion along the first row. This determinant is
equal to $\sgn_A \inc{I}{J} C_{I,A}$ by the definition of
$C_{I,A}$. Indeed, the number $C_{I,A}$ was defined as the
determinant for some positive ordering of vertices of $I$. The
ordering $i<j_1<\ldots<j_s$ (used to order the rows of matrix
\eqref{eqMatrixBig}) is either positive or negative depending on
the incidence sign $\inc{I}{J}$.

(2) $\tilde{\varphi}$ is surjective by Lemma
\ref{lemmaFaceRingBasis}.

(3) Ranks of both spaces are equal. Indeed, $\dim
\langle[X_I]\mid|I|=n-q\rangle/\langle R_{J,A}\rangle=\dim
\E{X}^2_{q,q}=h'_{n-q}(S_Q)$ by Proposition \ref{propEXstruct}. By
Proposition \ref{propSisBuch} the poset $S_Q$ is Buchsbaum. Thus
$\dim(\ko[S_Q]/\Theta)_{n-q}=h'_{n-q}(S_Q)$ by Schenzel's theorem
(see \cite{Sch}, \cite[Ch.II,\S8.2]{St}, or \cite[Prop.6.3]{NS}
for simplicial posets).

(4) If $\ko$ is a field, then we are done. Since the statement
holds over any field, the case $\ko=\Zo$ automatically follows.
\end{proof}

The Poincare duality in $X$ yields

\begin{cor}\label{corKernel}
The map $g\colon \ko[S_Q]\to H^*(X)$ factors through
$\ko[S_Q]/\Theta$ and the kernel of the homomorphism
$\tilde{g}\colon \ko[S_Q]/\Theta\to H^*(X)$ is additively
generated by the elements
\[
L'_{\beta,A}=\sum_{I, |I|=n-q}B_IC_{I,A}v_I,
\]
where $q\leqslant n-2$, $\beta\in\im(\delta_{q+1}\colon
H_{q+1}(Q,\dd Q)\to H_q(\dd Q))$, $\sum_{I, |I|=n-q}B_I[F_I]$ is a
cellular chain in $\dd Q$ representing $\beta$, and $A\subset
[n]$, $|A|=q$.
\end{cor}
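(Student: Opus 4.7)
The plan is to deduce this corollary directly from Propositions \ref{propTwoTypesOfRels} and \ref{propIsomQuotients} by Poincar\'e duality, so the main work is bookkeeping, not new computation. Concretely, Poincar\'e duality identifies the image of $g$ in $H^*(X)$ with the Poincar\'e dual of the submodule $\langle[X_I]\rangle/(\mathrm{relations})\subset H_*(X)$; and, once we fix the obvious pairing, the map $g$ corresponds to the degree-reversing map $\varphi$ from Proposition~\ref{propIsomQuotients}.

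First I would prove factorization through $\ko[S_Q]/\Theta$. By part~(1) of the proof of Proposition~\ref{propIsomQuotients}, for every $J$ and $A$ the element $\varphi(R_{J,A})\in\ko[S_Q]$ lies in $\Theta$; conversely, since the linear system $\{\theta_1,\dots,\theta_n\}$ is generated (as a $\ko$-submodule, after multiplying by the $v_J$'s and using the straightening-law basis of Lemma~\ref{lemmaFaceRingBasis}) by the very same combinations, one gets $\varphi(\langle R_{J,A}\rangle)=\Theta$. Since $R_{J,A}=0$ in $H_*(X)$ by Proposition~\ref{propTwoTypesOfRels}(1), it follows that $g(\theta_j)=0$ in $H^*(X)$ for every $j$, so $g$ descends to $\tilde g\colon \ko[S_Q]/\Theta\to H^*(X)$.

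Next I would identify the kernel of $\tilde g$. Proposition~\ref{propTwoTypesOfRels} says the full set of linear relations among the face classes $[X_I]$ in $H_*(X)$ is generated by $\{R_{J,A}\}$ and $\{R'_{\beta,A}\}$. Dualizing via $\varphi$: the first family maps to $\Theta$, and $R'_{\beta,A}$ maps to $L'_{\beta,A}$ by the very definition of these elements. Therefore, modulo $\Theta$, the kernel of $\tilde g$ is additively generated by $\{L'_{\beta,A}\mid q\leqslant n-2,\;\beta\in\im\delta_{q+1},\;A\subset[n],\;|A|=q\}$, which is exactly the claimed description.

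The only delicate point is the case $\ko=\Zo$: the factorization argument above is algebraic and holds over an arbitrary ring, but the identification of \emph{all} relations on face classes (hence of the kernel of $\tilde g$) relies on Proposition~\ref{propTwoTypesOfRels}, whose proof goes through the spectral sequences $\Ea{X}^*,\Ea{Y}^*$. Since over a field the result is immediate from the preceding propositions, and over $\Zo$ it is already baked into the spectral-sequence analysis (differentials into diagonal cells have exactly the images computed above), no further delicate torsion argument is needed. Hence the main obstacle is purely notational: keeping track of Poincar\'e duality, signs, and the identifications $\varphi([X_I])=v_I$ and $\varphi(R'_{\beta,A})=L'_{\beta,A}$.
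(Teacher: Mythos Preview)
Your proposal is correct and follows exactly the paper's approach: the paper's entire proof is the single line ``The Poincar\'e duality in $X$ yields'', i.e.\ the corollary is read off from Propositions~\ref{propTwoTypesOfRels} and~\ref{propIsomQuotients} via Poincar\'e duality, and you have simply spelled out that bookkeeping. One small imprecision: the equality $\varphi(\langle R_{J,A}\rangle)=\Theta$ is not literally true (the left side lies in the $\ko$-span of the $v_I$, while $\Theta$ is an ideal), but what you actually need---that each generator $\theta_j$ lies in $\varphi(\langle R_{J,A}\rangle)$, namely $\theta_j=\pm\varphi(R_{\hat 0,[n]\setminus\{j\}})$---holds, and this suffices for the factorization since $g$ is a ring homomorphism.
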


\begin{rem}
The ideal $\Theta\subset \ko[S_Q]$ coincides with the image of the
homomorphism $H^{>0}(BT^n)\to H^*_T(X)$. So the fact that $\Theta$
vanishes in $H^*(X)$ is not surprising. An interesting thing is
that $\Theta$ vanishes already in a second term of the spectral
sequence, while other relations in $H^*(X)$ demonstrate the
effects of higher differentials.
\end{rem}

Note, that the elements $R'_{\beta,A}=\sum_{I,
|I|=n-q}B_IC_{I,A}[X_I]\in \E{X}^2_{q,q}$ and
$L'_{\beta,A}=\sum_{I, |I|=n-q}B_IC_{I,A}v_I\in \ko[S_Q]/\Theta$
can be defined for any homology class $\beta\in H_q(\dd Q)$ (not
only for the image of the connecting homomorphism $\delta_{q+1}$).

Recall that the \emph{socle} of a module $\mathcal{M}$ over the
polynomial ring $\ko[m]$ is the submodule
\[
\soc\mathcal{M} \eqd \{y \in \mathcal{M} \mid \ko[m]^{+}\cdot y =
0\},
\]
where $\ko[m]^+$ is the maximal graded ideal of $\ko[m]$.

\begin{thm}\label{thmSocle}
For every $\beta\in H_q(\dd Q)$, $q\leqslant n-1$ and $A\subset
[n]$, $|A|=q$ the element $L'_{\beta,A}\in \ko[S_Q]/\Theta$ lies
in a socle of $\ko[S_Q]/\Theta$.
\end{thm}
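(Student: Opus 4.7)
Fix a vertex $i\in\ver(S_Q)$; the goal is to show $v_i\cdot L'_{\beta,A}\equiv 0\pmod{\Theta}$, which by the definition of the socle over $\ko[m]$ is equivalent to $L'_{\beta,A}\in\soc(\ko[S_Q]/\Theta)$. The strategy is algebraic: expand the product using the multiplication in $\ko[S_Q]$, reduce modulo $\Theta$, and identify the result as a linear combination of expressions of the form $L'_{\beta',A'}$ attached to cycles $\beta'$ supported on the acyclic facet $F_{\{i\}}$, which are then forced to vanish.

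\emph{Step 1 (Expand $v_i L'_{\beta,A}$).} Split the sum $v_i\sum_I B_I C_{I,A} v_I$ (with $|I|=n-q$) according to the relative position of the vertex $i$ and the simplex $I$: if $\{i\}\vee I=\emptyset$, the product vanishes; if $i\notin I$ but $\{i\}\vee I\ne\emptyset$, then $v_i v_I=\sum_{J\in\{i\}\vee I} v_J$ with $|J|=n-q+1$; if $i\in I$, first substitute $v_i$ modulo $\Theta$ using the $(\ast)$-condition \eqref{eqMinorCharMap}, as in the proof of Lemma~\ref{lemmaFaceRingBasis}, and then expand the resulting products.

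\emph{Step 2 (Identify the result with slice $L'$'s).} Collecting the three kinds of contributions from Step~1 is done via a cofactor identity on the characteristic matrix, parallel to the one used in the proof of Proposition~\ref{propIsomQuotients} (where $\varphi(R_{J,A})$ is rewritten as $\pm v_J(a_1\theta_{\alpha_1}+\dots+a_{s+1}\theta_{\alpha_{s+1}})$). That identity lets one recognise $v_i L'_{\beta,A}\pmod{\Theta}$ as a linear combination $\sum_{j\in A}\pm\,L'_{\beta'_j,\,A\setminus\{j\}}$, where each $\beta'_j$ is the natural cellular $(q-1)$-chain supported on faces of $F_{\{i\}}\cap\dd Q$ obtained as the $(i,j)$-slice of the representative $\sum_I B_I[F_I]$ of $\beta$.

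\emph{Step 3 (Acyclicity kills the slice).} Each chain $\beta'_j$ is a $(q-1)$-cycle supported in $F_{\{i\}}$. Since $F_{\{i\}}$ is a proper face of $Q$, it is acyclic by hypothesis, so $\beta'_j=\dd\gamma_j$ for some cellular $q$-chain $\gamma_j$ in the face complex of $F_{\{i\}}$. Expanding $L'_{\beta'_j,\,A\setminus\{j\}}=L'_{\dd\gamma_j,\,A\setminus\{j\}}$ and invoking the first-type relations of Proposition~\ref{propTwoTypesOfRels}(1) identifies each term with an element of $\Theta$, hence zero in $\ko[S_Q]/\Theta$ by Proposition~\ref{propIsomQuotients}. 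The degenerate case $q=0$ is handled automatically by degree reasons: $v_i L'_{\beta,A}$ lives in degree $2(n+1)$, above the top of the graded ring $\ko[S_Q]/\Theta$.

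\emph{Main obstacle.} The heart of the proof is Step~2: the case $i\in I$ forces a detour through $\Theta$, and the combinatorial identity that repackages the minors $C_{I,A}$ into $C_{J,\,A\setminus\{j\}}$'s is a multi-index cofactor expansion whose signs must be tracked through the sign convention on $S_Q$ and the induced orientations of faces of $F_{\{i\}}$. Conceptually one would prefer to argue this geometrically using intersection theory on the auxiliary space $\wh X$ alluded to in the introduction, which would directly interpret $v_i\cdot(-)$ as intersection with the characteristic submanifold $X_{\{i\}}$ and exhibit each $\beta'_j$ as a genuine geometric intersection cycle.
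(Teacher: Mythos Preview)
Your plan is sound in spirit but different from the paper's proof, and Step~2 is a genuine gap as written.

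The paper takes exactly the geometric route you flag as preferable in your ``Main obstacle'' paragraph. It constructs the collar space $\wh{X}=(\dd Q\times[0,1]\times T^n)/\simc$, observes that in $H_*(\wh{X})$ the face classes satisfy only the first-type relations (the spectral sequence collapses at page~2 because $H_*(\wh{Q},\dd_0\wh{Q})=0$), so that by Lemma~\ref{lemmaCollarFaceClasses} multiplication by $v_i$ in $\ko[S_Q]/\Theta$ corresponds to intersection with $[X_i]$ in $\wh{X}$. Then one simply represents $\beta$ by a geometric cycle $\mathcal{B}\subset\dd_0\wh{Q}$ and pushes it into the interior of the collar, where it misses every $F_i$; hence $R'_{\beta,A}\cap[X_i]=0$ and therefore $v_i L'_{\beta,A}=0$. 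No cofactor identity, no sign-tracking, no slicing.

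Your algebraic approach is in the spirit of Novik--Swartz and can be made to work, but as stated it is incomplete in a substantive way. In Step~2 you neither define the slices $\beta'_j$ precisely nor verify that they are cycles, and you do not state the determinantal identity that would rewrite $\sum_I B_I C_{I,A}\,v_i v_I$ as $\sum_{j\in A}\pm L'_{\beta'_j,\,A\setminus\{j\}}$ modulo $\Theta$. The case $i\in I$ is the real difficulty: there $v_i v_I$ is not a linear combination of $v_J$'s in $\ko[S_Q]$, and eliminating $v_i$ via a maximal simplex $J\geqslant I$ as in Lemma~\ref{lemmaFaceRingBasis} brings in terms indexed by \emph{all} vertices $i'\nleq J$, not just those adjacent to $I$; organising what survives into the claimed form is exactly the missing computation. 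A smaller point: your degree argument disposes of $q=0$, but for $q=1$ the slices $\beta'_j$ are $0$-chains in the acyclic facet $F_{\{i\}}$, and acyclicity only forces a $0$-cycle to bound when its augmentation vanishes, which you would also need to check.

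In short: the paper's proof is the four-line geometric argument you anticipated; your algebraic alternative is viable but would require you to actually produce and prove the cofactor identity in Step~2 before it counts as a proof.
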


We postpone the proof to Section \ref{SecCollar}.

%%%%%%%%%%%%%%%%%%%%%%%%%%%%%%%%%%%%%%%%%%%%%%%%%%%%%%%%%%%%%%
%
%
%
%
%
%
%
%%%%%%%%%%%%%%%%%%%%%%%%%%%%%%%%%%%%%%%%%%%%%%%%%%%%%%%%%%%%%%

\section{Non-face cycles of $X$}\label{SecOtherCycles}

\subsection{Spine and diaphragm cycles}
In this section we give a geometrical description of homology
classes of $Q$ different from face classes.

\begin{con}\label{conSpineCycles}
Let $\eta\in H_k(Q)$ be a cycle of $Q$ and let $a\in H_l(T^n)$,
$l<k$ be a cycle of $T^n$ represented by a subtorus
$T^{(a)}\subset T^n$. They determine the homology class
$\eta\otimes[T^{(a)}]\in H_{k,l}(Y)\cong H_k(Q)\otimes H_l(T^n)$.
Thus they determine the class $\spi_{\eta,a}\in H_{k+l}(X)$ via
the isomorphism $f_*\colon H_{k,l}(Y)\to H_{k,l}(X)$ asserted by
pt.(1) of Proposition \ref{propHXstruct}. The cycles of this form
(which are the elements of $H_{k,l}(X)$ for $k>l$) will be called
\emph{spine cycles (or spine classes)}.

Suppose that $\eta$ is represented by an embedded pseudomanifold
$N\subset Q$. We may assume that $N$ lies in the interior of $Q$.
Then the spine cycle $\spi_{\eta,a}$ is represented by an embedded
pseudomanifold $N\times T^{(a)}\subset Q^{\circ}\times T^n\subset
X$.
\end{con}

\begin{con}
Suppose $k<n$ and let $\zeta\in H_k(Q,\dd Q)$ be a relative
homology class. Assume for simplicity that $\zeta$ is represented
by a submanifold (or, generally, embedded pseudomanifold)
$L\subset Q$ of dimension $k$ with boundary $\dd L\subset \dd Q$
(which may be empty). Every proper face of $Q$ is acyclic, thus
can be considered as a homological cell of $\dd Q$. Therefore
without lost of generality we may assume $\dd L\subset Q_{k-1}$.
We also assume that $L\setminus \dd L\subset Q\setminus \dd Q$.

For each class $a\in H_l(T^n)$, represented by an $l$-dimensional
subtorus $T^{(a)}$, consider the subset $Z_{L,a}=(L\times
T^{(a)})/\simc\subset X$.
\end{con}

\begin{prop}\label{propDiaphragmDef}\mbox{}

\begin{itemize}
\item If $l\geqslant k$, the subset $Z_{L,a}$ is a pseudomanifold.
Thus it represents a well-defined element $\dia_{L,a}\in
H_{k+l}(X)$ which will be called a diaphragm class (or diaphragm
cycle).

\item If $l>k$, the class $\dia_{L,a}\in H_{k+l}(X)$ depends only on
the class $\zeta=[L]\in H_k(Q,\dd Q)$ but not on the particular
representative $L$.
\end{itemize}
\end{prop}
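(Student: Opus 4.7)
The plan is to realize $\dia_{L,a}$ as the pushforward of the fundamental class of a pseudomanifold embedded in $X$, and then, for (2), to build a pseudomanifold cobordism inside $X$ between $Z_{L,a}$ and $Z_{L',a}$. The whole argument is driven by one dimension estimate: over a face $F_I$ of dimension $d$, the fiber of the map $Z_{L,a}\to L$ is $T^{(a)}/(T^{(a)}\cap T_I)$, and
\[
\dim\bigl(T^{(a)}/(T^{(a)}\cap T_I)\bigr)\leqslant l-\max(0,l-d)=\min(l,d),
\]
since $\dim(T^{(a)}\cap T_I)\geqslant \dim T^{(a)}+\dim T_I-n=l-d$.

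For (1), I first note that over $L^\circ\subset Q^\circ$ no identifications occur, so the top stratum of $Z_{L,a}$ is $L^\circ\times T^{(a)}$, of dimension $k+l$. Over a face $F_I$ of dimension $d\leqslant k-1$ (this is where $\dd L\subset Q_{k-1}$ meets $F_I$), the local dimension of $Z_{L,a}$ is at most $d+\min(l,d)=2d\leqslant 2(k-1)\leqslant k+l-2$, where the last inequality uses $l\geqslant k$. Hence every singular stratum has codimension $\geqslant 2$, so $Z_{L,a}$ is a $(k+l)$-dimensional pseudomanifold whose fundamental class pushes forward to a well-defined $\dia_{L,a}\in H_{k+l}(X)$.

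For (2), given two representatives $L,L'$ of $\zeta$, I pick a $(k+1)$-dimensional embedded pseudomanifold $W\subset Q$ with $W^\circ\subset Q^\circ$ whose boundary decomposes as $L\cup L'\cup D$, with $D\subset\dd Q$ of dimension $k$. Using the same cellular approximation that allowed the hypothesis $\dd L\subset Q_{k-1}$ in the setup of (1) — namely, the acyclicity of every proper face, which lets one move a $k$-chain of $\dd Q$ into the union $Q_k$ of $k$-dimensional faces — I arrange $D\subset Q_k$. Then $Z_{W,a}=(W\times T^{(a)})/\simc$ has top stratum of dimension $k+l+1$ over $W^\circ$; codimension-$1$ strata over $L^\circ,(L')^\circ$ of dimension $k+l$; strata over $\dd L\cup\dd L'\subset Q_{k-1}$ of dimension $\leqslant 2(k-1)$; and strata over $D\subset Q_k$ of dimension $\leqslant 2k$. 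The hypothesis $l>k$ gives $2k\leqslant k+l-1$, so all non-top strata other than those over $L^\circ,(L')^\circ$ have codimension $\geqslant 2$; therefore $Z_{W,a}$ is a pseudomanifold with boundary $Z_{L,a}\sqcup Z_{L',a}$ (the $D$-part being absorbed into the lower-dimensional singular locus of this boundary). This supplies the cobordism in $X$ and yields $\dia_{L,a}=\dia_{L',a}$ in $H_{k+l}(X)$.

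The main obstacle I expect is the cellular-approximation step that places $D$ inside $Q_k$, together with the orientation bookkeeping: one invokes the acyclicity of each proper face (as in Proposition \ref{propEQstruct}) to reduce a general $k$-chain of $\dd Q$ to one concentrated in $k$-dimensional faces, and one then has to check that the induced orientations of $Z_{L,a}$ and $Z_{L',a}$ as codimension-$1$ pieces of $\dd Z_{W,a}$ agree with the orientations used to define $\dia_{L,a}$ and $\dia_{L',a}$. Once this bookkeeping is done, the inequality $2k<k+l$ finishes the argument with no further work.
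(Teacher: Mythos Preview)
Your proof is correct and follows essentially the same route as the paper: both show that the exceptional locus of $Z_{L,a}$ (resp.\ of the bordism $Z_{W,a}$) has codimension at least $2$ by bounding fiber dimensions over low-dimensional faces, and both invoke acyclicity of proper faces to push the ``extra'' boundary piece $D$ into $Q_k$. The only cosmetic differences are that the paper states the fiber bound as $\leqslant l-1$ rather than your sharper $\min(l,d)$, and that it works with a \emph{mapped} pseudomanifold bordism $\phi\colon\Xi\to Q$ rather than an embedded one (embedding of $W$ cannot always be arranged, but is not needed for the homological conclusion).
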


\begin{proof}
The set $((L\setminus \dd L)\times T^{(a)})/\simc = (L\setminus
\dd L)\times T^{(a)}$ is a manifold of dimension $k+l$. The
exceptional locus $(\dd L\times T^{(a)})/\simc$ has dimension at
most $k+l-2$. Indeed, we have $\dd L\subset Q_{k-1}$, thus, under
the projection map $(\dd L\times T^{(a)})/\simc\to\dd L$, every
point $x\in \dd L$ has a preimage of the form $T^{(a)}/T_I$ with
$|I|\geqslant n-k+1$. This set has dimension at most $l-1$ since
$l+(n-k+1)>n$. Thus the total dimension of exceptional locus is at
most $\dim \dd L+(l-1)=k+l-2$.

The second statement can be proved similarly. Let $(L_1,\dd L_1)$
and $(L_2,\dd L_2)$ be two manifolds representing the same element
$\zeta\in H_k(Q,\dd Q)$. There exists a pseudomanifold bordism
between them, i.e. a pseudomanifold $\Xi$ of dimension $k+1$ with
boundary and a map $\phi\colon \Xi\to Q$ such that $L_1$, $L_2$
are disjoint submanifolds of $\partial \Xi$, the restriction of
$\phi$ to $L_\epsilon$ is the inclusion of
$L_\epsilon\hookrightarrow Q$, and $\phi(\partial \Xi\setminus
(L_1^{\circ}\sqcup L_2^{\circ}))\subset \partial Q$ (this follows
from the geometrical definition of homology, see \cite[App.
A.2]{RuSa}). Again, we may assume that $\phi(\partial \Xi\setminus
(L_1^{\circ}\sqcup L_2^{\circ}))\subset Q_k$. Similar to the first
statement, we can consider the space $(\Xi\times T^{(a)})/\simc$
of dimension $k+l+1$. This space is a pseudomanifold with
boundary, and the boundary is exactly the difference
$Z_{L_1,a}-Z_{L_2,a}$. Thus $\dia_{L_1,a}=\dia_{L_2,a}$ in
$H_*(X)$.
\end{proof}

Thus for $k<l$ there is a well defined homology class
$\dia_{\zeta,a}\eqd\dia_{L,a}\in H_{k,l}(X)$ depending on
$\zeta\in H_k(Q,\dd Q)$ and $a\in H_l(T^n)$. These classes span
the homology modules $H_{k,l}(X)$ for $k<l$ and correspond to
pt.(2) of Proposition \ref{propHXstruct}.

When $k=l<n$ we call the classes $\dia_{L,a}$ \emph{extremal
diaphragm classes}. In this case the situation is a bit different:
the classes $\dia_{L,a}$ depend not only on the homology class of
$L$ but on the representative $L$ itself. Nevertheless, if $L_1$
and $L_2$ represent the same class in $H_k(Q,\dd Q)$, then the
classes $\dia_{L_1,a}, \dia_{L_2,a}\in H_{k,k}(X)$ coincide modulo
face classes, as proved below. Our goal is to derive exact
formulas, thus we restrict to the case, when $a\in H_l(T^n)$ is
represented by a coordinate subtorus $T^{(A)}$ for $A\subset [n]$,
$|A|=l$.

\begin{con}
Let $\phi_\epsilon\colon (L_\epsilon,\partial L_\epsilon)\to
(Q,\partial Q)$, $\epsilon=1,2$, be two pseudomanifolds
representing the same element $\zeta\in H_k(Q,\partial Q)$, $k<n$.
As in the proof of Proposition \ref{propDiaphragmDef} consider a
pseudomanifold bordism $(\Xi,\partial \Xi)$ between $L_1$ and
$L_2$, and the map $\phi\colon \Xi\to Q$, which sends the boundary
$\dd L$ into the union of $L_1$, $L_2$ and $Q_k$. The skeletal
stratification of $Q$ induces a stratification on $\Xi$. The
restriction of the map $\phi$ sends $\Xi_{k-1}$ to $Q_{k-1}$. Let
$\delta$ be the connecting homomorphism
\[
\delta\colon H_{k+1}(\Xi,\dd \Xi)\to H_k(\dd \Xi,\Xi_{k-1})
\]
in the long exact sequence of the triple $(\Xi,\dd
\Xi,\Xi_{k-1})$. Consider the sequence of homomorphisms

\begin{multline*}
H_{k+1}(\Xi,\dd \Xi)\xrightarrow{\delta} H_{n-1}(\dd
\Xi,\Xi_{k-1}) \cong \\ H_k(L_1,\dd L_1)\oplus H_k(L_2,\dd
L_2)\oplus H_k(\dd \Xi\setminus
(L_1^{\circ}\cup L_2^{\circ}), \dd \Xi_{k-1}) \xrightarrow{\id\oplus\id\oplus \phi_*}\\
H_k(L_1,\dd L_1)\oplus H_k(L_2,\dd L_2)\oplus H_k(Q_k, Q_{k-1}).
\end{multline*}
It sends the fundamental cycle $[\Xi]\in H_{k+1}(\Xi,\dd \Xi)$ to
the element
\begin{equation}\label{eqRelPseudBordism}
\left([L_1],-[L_2],\sum_{I, \dim F_I=k}D_I[F_I]\right)
\end{equation}
of the group $H_k(L_1,\dd L_1)\oplus H_k(L_2,\dd L_2)\oplus
H_k(Q_k, Q_{k-1})$, for some numbers $D_I\in\ko$.
\end{con}

\begin{prop}\label{propRelOnDiaphragm}
Let $L_1,L_2$ be two manifolds representing the same class
$\zeta\in H_k(Q,\partial Q)$, $k<n$. Consider any subset $A\subset
[n]$, $|A|=k$ and let $a\in H_k(T^n)$ be the fundamental class of
the coordinate subtorus $T^{(A)}$. Then there is a relation in
$H_{2k}(X)$:
\begin{equation}\label{eqZeroCombination}
\dia_{L_1,a}-\dia_{L_2,a} + \sum_{I, \dim F_I=k}D_IC_{I,A}[X_I]=0.
\end{equation}
The numbers $D_I$ are given by \eqref{eqRelPseudBordism}, and the
numbers $C_{I,A}$ were defined in
Lemma~\ref{lemmaToricCoefficient}.
\end{prop}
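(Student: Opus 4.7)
The plan is to construct an explicit $(2k{+}1)$-chain in $X$ whose boundary realizes the desired relation. Apply $\phi\times\id\colon \Xi\times T^{(A)}\to Q\times T^n = Y$ and compose with the quotient map $f\colon Y\to X$ to obtain a $(2k{+}1)$-dimensional pseudomanifold-with-boundary $W\subset X$. Since $T^{(A)}$ is closed, $\partial W$ is the image of $\partial\Xi\times T^{(A)}$ in $X$, and hence represents $0\in H_{2k}(X)$.

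The decomposition of $\partial\Xi$ coming from \eqref{eqRelPseudBordism} splits $\partial W$ into three pieces: the subpseudomanifolds $L_1$ and $L_2$ produce the diaphragm cycles $Z_{L_1,a}$ and $-Z_{L_2,a}$ by construction, while the remaining chain $P\subset \phi^{-1}(Q_k)$ produces $f(P\times T^{(A)})$. Hence
\[
0 = [\partial W] = [Z_{L_1,a}] - [Z_{L_2,a}] + [f(P\times T^{(A)})]\quad\text{in } H_{2k}(X),
\]
and the proposition reduces to identifying $[f(P\times T^{(A)})]$ with $\sum_I D_I C_{I,A}[X_I]$.

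Both of these $2k$-cycles are supported in the subcomplex $X_k$, so it suffices to equate them in $H_{2k}(X_k)$. Since $\dim X_{k-1}\leqslant 2k-2$, the group $H_{2k}(X_{k-1})$ vanishes, and the long exact sequence of the pair $(X_k, X_{k-1})$ yields an injection
\[
H_{2k}(X_k)\hookrightarrow H_{2k}(X_k, X_{k-1})\cong \bigoplus_{|I|=n-k} H_k(F_I,\partial F_I)\otimes H_k(T^n/T_I),
\]
in which $[X_I]$ corresponds to $[F_I]\otimes\Omega_I$. The image of $[f(P\times T^{(A)})]$ in this relative group is then computed summand-wise: on the summand indexed by $I$ with $|I|=n-k$ the chain $P$ contributes $D_I[F_I]$, while the image of $[T^{(A)}]$ in $T^n/T_I$ equals $C_{I,A}\Omega_I$ by Lemma~\ref{lemmaToricCoefficient}. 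Thus both cycles map to $\sum_I D_I C_{I,A}[F_I]\otimes\Omega_I$, which forces the equality in $H_{2k}(X_k)\subset H_{2k}(X)$.

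The main delicacy is the verification that the pseudomanifold $\Xi\times T^{(A)}$ actually pushes forward to a bona fide singular $(2k{+}1)$-chain in $X$ near the exceptional locus where $f$ collapses orbits; this is ensured by the same codimension estimates used in the proof of Proposition~\ref{propDiaphragmDef}. The key numerical ingredient, namely that $T^{(A)}$ covers $T^n/T_I$ with multiplicity $C_{I,A}$, is supplied precisely by Lemma~\ref{lemmaToricCoefficient}, so no new computation is required.
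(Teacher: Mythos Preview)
Your argument is correct and follows the same strategy as the paper: push $\Xi\times T^{(A)}$ into $X$ (the paper phrases this as forming the quotient $(\Xi\times T^{(A)})/\simc'$ and mapping it to $X$ via $\phi\times\iota$), observe that it is a $(2k{+}1)$-pseudomanifold with boundary, and read off the relation from its boundary using Lemma~\ref{lemmaToricCoefficient}. Your extra paragraph identifying $[f(P\times T^{(A)})]$ with $\sum_I D_IC_{I,A}[X_I]$ via the injection $H_{2k}(X_k)\hookrightarrow H_{2k}(X_k,X_{k-1})=\E{X}^1_{k,k}$ makes explicit what the paper leaves to the reader; the paper simply asserts that the boundary ``represents the element $\dia_{L_1,A}-\dia_{L_2,A}+\sum D_IC_{I,A}[X_I]$ by Lemma~\ref{lemmaToricCoefficient}.'' One small point of phrasing: writing ``$W\subset X$'' is slightly imprecise since $\phi$ need not be an embedding, but what matters is the pushforward of the fundamental chain, which is exactly how the paper's quotient construction should be read.
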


\begin{proof}
Consider the space $(\Xi\times T^{(A)})/\simc'$ and the map
$\phi\times\iota\colon (\Xi\times T^{(A)})/\simc'\to X=(Q\times
T^n)/\simc$, where the relation $\sim'$ is induced from $\sim$ by
the map $\phi$, and $\iota\colon T^{(A)}\to T^n$ is the inclusion
map. The space $(\Xi\times T^{(A)})/\simc'$ is a pseudomanifold
with boundary, and its boundary represents the element
$\dia_{L_1,A}-\dia_{L_2,A} + \sum_{I, \dim F_I=k}D_IC_{I,A}[X_I]$
in $H_{2k}(X)$ by Lemma \ref{lemmaToricCoefficient}. Thus this
element vanishes in homology.
\end{proof}

Therefore, up to face classes, the middle homology group
$H_{k,k}(X)$ coincides with $H_k(Q,\dd Q)\otimes H_k(T^n)$ for
$k<n$. This was stated in equivalent form in pt.(3) of Proposition
\ref{propHXstruct}.

%%%%%%%%%%%%%%%%%%%%%%%%%%%%%%%%%%%%%%%%%%%%%%%%%%%%%%%%%%%%%%
%
%
%
%%%%%%%%%%%%%%%%%%%%%%%%%%%%%%%%%%%%%%%%%%%%%%%%%%%%%%%%%%%%%%

\subsection{Integral coefficients}
Proposition \ref{propHXstruct} was stated only over a field. On
the other hand, the geometrical constructions of the previous
subsection determine the additive homomorphisms
\[
\bigoplus_{k>l}H_k(Q)\otimes H_l(T^n)\to H_*(X),\qquad
\bigoplus_{k\leqslant l} H_k(Q,\dd Q)\otimes H_l(T^n)\to H_*(X)
\]
over $\Zo$ as well. Combining this with the inclusion of face
cycles
\[
\langle[X_I]\rangle/(R_{J,A}, R'_{\beta,A})\hookrightarrow H_*(X)
\]
we get a map
\begin{equation}\label{eqLargeHomomorph}
\left(\bigoplus_{k>l}H_k(Q)\otimes
H_l(T^l)\right)\oplus\left(\bigoplus_{k\leqslant l} H_k(Q,\dd
Q)\right)\oplus\left(\langle[X_I]\rangle/(R_{J,A},
R'_{\beta,A})\right)\to H_*(X)
\end{equation}
which is well defined for any coefficients and is an isomorphism
over a field. Now it follows by induction from the universal
coefficient theorem, that the map \eqref{eqLargeHomomorph} is an
isomorphism over $\Zo$. This proves

\begin{prop}\label{propHXstructIntegers}
Proposition \ref{propHXstruct} holds over $\Zo$. Homology groups
of $X$ are generated by face classes, spine classes, and diaphragm
classes. The groups $H_{k,l}(X)$ for $k>l$ are generated by spine
classes; the groups $H_{k,l}(X)$ for $k<l$ are generated by
non-extremal diaphragm classes; the short exact sequence
\[
0\rightarrow\Ea{X}^{\infty}_{k,k}\rightarrow H_{k,k}(X)\rightarrow
H_k(Q,\dd Q)\otimes\Lambda_k \rightarrow 0.
\]
identifies the quotient of $H_{k,k}(X)$ by the face classes with
the group of extremal diaphragm classes. This short exact sequence
splits, but the splitting is not canonical.
\end{prop}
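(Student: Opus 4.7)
The plan is to prove that the $\Zo$-linear map \eqref{eqLargeHomomorph}, which I will denote $\phi_\Zo\colon A\to H_*(X;\Zo)$, is an isomorphism of finitely generated abelian groups; all remaining assertions of the proposition then read off from the direct-sum structure of its domain. The three types of generators of $A$---the face classes $[X_I]$, the spine classes $\spi_{\eta,a}=[N\times T^{(a)}]$ from Construction \ref{conSpineCycles}, and the diaphragm classes $\dia_{L,a}=[(L\times T^{(a)})/\simc]$ of Proposition \ref{propDiaphragmDef}---are defined geometrically without reference to the coefficient ring, and the relations $R_{J,A}, R'_{\beta,A}$ come from integral chains in $X$, so $\phi_\Zo$ is a well-defined homomorphism over $\Zo$.

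First I would verify that after tensoring with an arbitrary field $\ko$, the resulting map $\phi_\ko$ realizes the decomposition of Proposition \ref{propHXstruct}. For $k>l$ the spine cycles span the image of the isomorphism $f_*\colon H_{k,l}(Y;\ko)\to H_{k,l}(X;\ko)$ of pt.(1); for $k<l$ the diaphragm cycles realize the identification $H_{k,l}(X;\ko)\cong H_k(Q,\dd Q;\ko)\otimes\Lambda_l$ of pt.(2); for $k=l<n$ the face classes exhaust $\Ea{X}^{\infty}_{k,k}$ by the discussion preceding Proposition \ref{propTwoTypesOfRels}, while the extremal diaphragm classes lift $H_k(Q,\dd Q;\ko)\otimes\Lambda_k$ to a complement via Proposition \ref{propRelOnDiaphragm}; and the top class corresponds to the fundamental class. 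Counting dimensions using pt.(4) of Proposition \ref{propHXstruct}, $\phi_\ko$ is therefore an isomorphism for every field $\ko$.

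Second I would upgrade from fields to $\Zo$. The map $\phi$ is natural in coefficients: the K\"unneth theorem for $Q\times T^n$ (whose torus factor is torsion-free) together with the naturality of the connecting homomorphism $\delta$ for $(Q,\dd Q)$ produce a morphism of universal-coefficient short exact sequences from the one for $A$ to the one for $H_*(X;\Zo)$, compatible with $\phi$. Since $\phi_\ko$ is an isomorphism for $\ko=\Qo$ and $\ko=\mathbb{F}_p$ for every prime $p$, an induction on total degree combined with the five lemma applied to these universal coefficient sequences forces $\phi_\Zo$ to be an isomorphism in every degree. The main technical obstacle lies here: one must confirm that the UCT sequence for $A$---in particular the $\Tor$-contribution coming from the face-relation quotient $\langle[X_I]\rangle/(R_{J,A},R'_{\beta,A})$---maps correctly into the UCT sequence of $H_*(X;\Zo)$, which requires tracing the geometric cycles through the connecting maps of the orbit-type filtration.

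Finally, the remaining assertions follow by inspection. The bidegree identifications for $k\neq l$ are literal summands of $A$, generated respectively by spine and by non-extremal diaphragm classes. For $k=l<n$, the face summand of $A$ injects into $H_{k,k}(X;\Zo)$ with image $\Ea{X}^{\infty}_{k,k}$ and the extremal diaphragm summand maps isomorphically onto a complement $H_k(Q,\dd Q)\otimes\Lambda_k$, giving simultaneously the short exact sequence and its splitting. The splitting is non-canonical because, as noted after Proposition \ref{propDiaphragmDef}, the diaphragm class $\dia_{L,a}$ depends on the choice of pseudomanifold representative $L$ of a class in $H_k(Q,\dd Q)$ and not merely on the homology class itself.
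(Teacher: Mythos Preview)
Your proposal follows essentially the same route as the paper: construct the map \eqref{eqLargeHomomorph} over $\Zo$, observe that it is an isomorphism over every field by Proposition \ref{propHXstruct}, and then upgrade to $\Zo$ via the universal coefficient theorem together with an induction on degree. The paper's own argument is in fact terser than yours---it simply asserts that ``it follows by induction from the universal coefficient theorem''---so your explicit mention of naturality and the five lemma, and your honest flagging of the point where one must check compatibility of the UCT sequences for the domain and target, add rather than subtract from the exposition.
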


%%%%%%%%%%%%%%%%%%%%%%%%%%%%%%%%%%%%%%%%%%%%%%%%%%%%%%%%%%%%%%
%
%
%
%%%%%%%%%%%%%%%%%%%%%%%%%%%%%%%%%%%%%%%%%%%%%%%%%%%%%%%%%%%%%%

\subsection{Auxiliary cycles and relations}\label{subsecConventSpineToDiaphragm}
In construction \ref{conSpineCycles} we defined the cycles
$\spi_{\eta,a}$ for each $\eta\in H_k(Q)$ and $a\in H_l(T^n)$
under assumption that $k>l$. But the same construction can be
applied for any $k$ and $l$. If $\eta$ is represented by a
pseudomanifold $N$ in the interior of $Q$ and $a$ is represented
by a subtorus, then the product $N\times T^{(a)}$ is a submanifold
in $Q^{\circ}\times T^n\subset X$, thus represents an element
$[N\times T^{(a)}]\in H_{k+l}(X)$.

Although for $k\leqslant l$ the element $N\times T^{(a)}$ is not a
spine cycle, we keep denoting it $\spi_{\eta,a}$. For $k<l$ we
have $\spi_{\eta,a}=\dia_{\eta',a}$, where $\eta'$ is the image of
$\eta$ in $H_k(Q,\dd Q)$.

%%%%%%%%%%%%%%%%%%%%%%%%%%%%%%%%%%%%%%%%%%%%%%%%%%%%%%%%%%%%%%
%
%
%
%
%
%
%
%%%%%%%%%%%%%%%%%%%%%%%%%%%%%%%%%%%%%%%%%%%%%%%%%%%%%%%%%%%%%%

\section{Intersections in $H_*(X)$}\label{SecIntersect}

Let $\cap\colon H_k(M)\otimes H_l(M)\to H_{k+l-\dim M}(M)$ denote
the intersection product on a closed manifold $M$, i.e. an
operation Poincare dual to the cup-product in cohomology. From the
geometrical structure of $X$ (and also from Proposition
\ref{propIsomQuotients}) follows

\begin{prop}\label{propIntersectFace}
If $I_1, I_2\in S_Q$, then $[X_{I_1}]\cap [X_{I_2}]=[X_{I_1\cap
I_2}] \cap \sum_{J\in I_1\vee I_2}[X_J]$.
\end{prop}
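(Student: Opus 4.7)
The plan is to reduce Proposition \ref{propIntersectFace} to the defining relation in the face ring $\ko[S_Q]$, transported through Poincar\'e duality. All the combinatorial work is already packaged in Proposition \ref{propIsomQuotients} and in the definition of the ring map $g$ of \eqref{eqFaceToCohomology}, so the argument will be short.

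First I would identify $g(v_I)\in H^*(X)$ with $\mathrm{PD}([X_I])$. The map $g$ is the composition $\ko[S_Q]\hookrightarrow H_T^*(X)\to H^*(X)$ of two ring homomorphisms, hence itself a ring homomorphism. By \cite[Lm.6.4]{MasPan} the first arrow sends $v_I$ to the equivariant Poincar\'e dual of the face submanifold $X_I$, and the fiber-inclusion pullback $H_T^*(X)\to H^*(X)$ sends equivariant Poincar\'e duals to ordinary Poincar\'e duals on the closed oriented manifold $X$. So $g(v_I)=\mathrm{PD}([X_I])$.

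Next I would write down the defining relation of the face ring,
\[
v_{I_1}\cdot v_{I_2}=v_{I_1\cap I_2}\cdot\sum_{J\in I_1\vee I_2}v_J,
\]
apply $g$, and use the standard identity $\mathrm{PD}(\alpha\cap\beta)=\mathrm{PD}(\alpha)\smile \mathrm{PD}(\beta)$ on both sides to rewrite the cohomological equality as an equality of Poincar\'e duals of intersection products of face classes. Applying $\mathrm{PD}^{-1}$ yields the claim in $H_*(X)$.

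There is no genuine obstacle, since the combinatorial and linear-algebraic content has already been absorbed by Propositions \ref{propTwoTypesOfRels} and \ref{propIsomQuotients}. The only point to verify carefully is the compatibility of $g$ with Poincar\'e duality on face classes, i.e.\ that $g(v_I)$ really is $\mathrm{PD}([X_I])$; this is a standard property of the Borel construction over a closed orientable manifold and is essentially what lets one treat intersection of face submanifolds algebraically via the face ring.
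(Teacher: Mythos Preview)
Your argument is correct and is precisely the algebraic route the paper gestures at with ``(and also from Proposition \ref{propIsomQuotients})'': push the face-ring relation through the ring homomorphism $g$ of \eqref{eqFaceToCohomology} and Poincar\'e-dualize. The paper additionally offers a direct geometric justification (``from the geometrical structure of $X$''), namely that the face submanifolds $X_{I_1}$ and $X_{I_2}$ meet along $\bigsqcup_{J\in I_1\vee I_2}X_J$ with the excess encoded by $X_{I_1\cap I_2}$, but your cohomological transport is equivalent and in fact more self-contained; note that Propositions \ref{propTwoTypesOfRels} and \ref{propIsomQuotients} are not actually used in your chain of implications, only that $g$ is a ring map with $g(v_I)=\mathrm{PD}([X_I])$.
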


Intersections of spine and diaphragm classes can also be described
geometrically. There exists an intersection product, $\cap\colon
H_{k_1}(Q)\otimes H_{k_2}(Q,\dd Q)\to H_{k_1+k_2-n}(Q)$ dual to
the cohomology product $H^{n-k_1}(Q,\dd Q)\otimes H^{n-k_2}(Q)\to
H^{2n-k_1-k_2}(Q,\dd Q)$. If the classes $\eta\in H_{k_1}(Q)$ and
$\zeta\in H_{k_2}(Q,\dd Q)$ are represented by a smooth
submanifold $N$ and a smooth submanifold with boundary $(L,\dd L)$
respectively, and if $N$ intersects $L$ transversely, then
$\eta\cap\zeta$ is represented by a submanifold $N\cap L$. There
is also an intersection product in homology of torus $\cap\colon
H_{l_1}(T^n)\otimes H_{l_2}(T^n)\to H_{l_1+l_2-n}(T^n)$. From the
geometrical construction of cycles in $X$ we conclude that the
intersection product on $H_*(X)$ has the following structure.

\begin{prop}\label{propIntersections}\mbox{}

\begin{enumerate}
\item The cycles $\spi_{\eta,a}\in H_{k_1,l_1}(X)$, $k_1>l_1$ and
$\dia_{L,b}\in H_{k_2,l_2}(X)$, $k_2\leqslant l_2$ satisfy
\[
\spi_{\eta,a}\cap \dia_{L,b} = \spi_{\eta\cap[L], a\cap b}.
\]
Since $\dim (\eta\cap [L]) = k_1+k_2-n$ and $\dim (a\cap b) =
l_1+l_2-n$, the element $\spi_{\eta\cap[L], a\cap b}$ is either a
spine class (if $k_1+k_2>l_1+l_2$), or a diaphragm class
determined in subsection \ref{subsecConventSpineToDiaphragm} (if
$k_1+k_2\leqslant l_1+l_2$).

\item The cycles $\spi_{\eta',a}\in H_{k_1,l_1}(X)$, $k_1>l_1$ and
$\spi_{\eta'',b}\in H_{k_2,l_2}(X)$, $k_2>l_2$ satisfy
\[
\spi_{\eta',a}\cap \spi_{\eta'',b} = \spi_{\eta'\cap\eta'', a\cap
b}.
\]
The result is a spine cycle.

\item Spine cycles do not meet face cycles: $\spi_{\eta,a}\cap
[X_I] = 0$.
\end{enumerate}
\end{prop}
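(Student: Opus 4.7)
The plan is to prove all three statements geometrically, using the fact that every spine cycle has a representative inside the free part $\mu^{-1}(Q^{\circ})\cong Q^{\circ}\times T^n$, which is an open oriented product submanifold of $X$. The intersection pairing will then be computed via small transverse perturbations together with the product formula for intersection products in $Q^{\circ}\times T^n$. Part (3) is essentially immediate: for a proper face $F_I\subset\dd Q$ the face submanifold $X_I=\mu^{-1}(F_I)$ sits over $\dd Q$, whereas the representative $N\times T^{(a)}$ of $\spi_{\eta,a}$ lies inside $\mu^{-1}(Q^{\circ})$, so the two representatives are disjoint and the intersection vanishes at the chain level.

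For (2) I would choose pseudomanifold representatives $N',N''\subset Q^{\circ}$ of $\eta'$ and $\eta''$ (cf.\ App.\ A.2 of \cite{RuSa}) and, by a small isotopy, arrange that $N'$ and $N''$ meet transversely inside the oriented manifold $Q^{\circ}$; after translating $T^{(b)}$ inside $T^n$ we may further assume $T^{(a)}$ and $T^{(b)}$ are transverse as well. Then
\[
(N'\times T^{(a)})\cap(N''\times T^{(b)})=(N'\cap N'')\times(T^{(a)}\cap T^{(b)})
\]
computes the intersection product in $X$, because $Q^{\circ}\times T^n\hookrightarrow X$ is an open inclusion of oriented manifolds; the product formula identifies the right-hand side with $\spi_{\eta'\cap\eta'',\,a\cap b}$. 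The hypotheses $k_1>l_1$, $k_2>l_2$ guarantee that the resulting class is genuinely a spine class.

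For (1) the key observation is that the diaphragm representative $(L\times T^{(b)})/\simc$ is a pseudomanifold whose exceptional locus projects to $\dd L\subset\dd Q$. Since $N$ may be taken inside $Q^{\circ}$, the entire intersection takes place in the free part $Q^{\circ}\times T^n$, where the diaphragm reduces to $(L\cap Q^{\circ})\times T^{(b)}$. After a transverse isotopy the computation from (2) applies verbatim and yields $(N\cap L)\times(T^{(a)}\cap T^{(b)})$, which represents $\spi_{\eta\cap[L],\,a\cap b}$ in the extended sense of subsection~\ref{subsecConventSpineToDiaphragm}; depending on whether $k_1+k_2$ exceeds $l_1+l_2$, this is classified as a spine or as a (non-extremal) diaphragm class.

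The main technical obstacle is justifying that all of the geometric moves — representing classes of $H_*(Q)$ and $H_*(Q,\dd Q)$ by embedded oriented pseudomanifolds, pushing $N$ off $\dd Q$ using the collar structure of the faces, and making pseudomanifold intersections transverse while keeping track of orientations — legitimately compute the intersection product dual to the cup product. This is all standard pseudomanifold bordism, and the signs come out correctly because of the product orientation conventions fixed in Section~\ref{SecComput} and the compatibility of intersection pairings with open inclusions of oriented manifolds.
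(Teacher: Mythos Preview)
Your proposal is correct and matches the paper's approach exactly: the paper's entire proof is the single sentence ``The proof follows directly from the constructions,'' and what you have written is a faithful expansion of that sentence, using the product structure on the free part $Q^{\circ}\times T^n$ and the fact that spine representatives avoid $\dd Q$. Your restriction in part~(3) to proper $I$ (so that $F_I\subset\dd Q$) is appropriate, since the case $I=\hat{0}$ gives the fundamental class and the statement is vacuous or trivially adjusted there.
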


The proof follows directly from the constructions.

\begin{prop}\label{propFaceIsIdeal}
The linear span of proper face classes $[X_I]$ is an ideal of
$H_*(X)$ with respect to the intersection product.
\end{prop}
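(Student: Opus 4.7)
My plan is to reduce the statement to the Masuda--Panov case by restriction to the face submanifold $X_I$. In view of Propositions \ref{propIntersectFace} and \ref{propIntersections}(3) the only case that is not yet covered is the intersection of a face class with a diaphragm class, but it is more convenient to argue uniformly: it suffices to show that for every proper simplex $I\in S_Q$ (i.e.\ $I\ne\minel$) and every class $\alpha\in H_*(X)$ the intersection $[X_I]\cap\alpha$ is a linear combination of face classes.

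First I would rewrite this intersection as a pushforward from $X_I$. Since $X_I\subset X$ is a closed smooth submanifold of codimension $2|I|$ whose normal Thom class is Poincar\'e dual to $[X_I]$, one has $[X_I]\cap\alpha=(\iota_I)_*\iota_I^!(\alpha)$, where $\iota_I\colon X_I\hookrightarrow X$ is the inclusion and $\iota_I^!\colon H_*(X)\to H_{*-2|I|}(X_I)$ is the Gysin homomorphism; equivalently, if $\alpha$ is represented by a smooth cycle $C$ transverse to $X_I$, then $[X_I]\cap\alpha=(\iota_I)_*[X_I\cap C]$. Hence it is enough to prove that the image of $(\iota_I)_*\colon H_*(X_I)\to H_*(X)$ lies in the $\ko$-span of the face classes $[X_J]$, $J\in S_Q$.

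The key observation is that $X_I$ itself is a locally standard torus manifold belonging to the \emph{stronger} acyclic class, namely the one in which all faces of the orbit space, including the orbit space itself, are acyclic. Indeed, $X_I$ carries a locally standard $(T^n/T_I)$-action with orbit space $F_I$, and its dual simplicial poset consists of the simplices $J\in S_Q$ with $J\geqslant I$; for every such $J$ the face $F_J$ is a proper face of $Q$ (since $I\ne\minel$), and therefore acyclic by our standing assumption. Triviality of $Y\to Q$ descends to triviality of the corresponding principal $(T^n/T_I)$-bundle over $F_I$. Hence the theorem of Masuda--Panov recalled in the introduction applies to $X_I$ and yields $H^*(X_I;\ko)\cong\ko[S_{F_I}]/\Theta_I$; combined with Lemma \ref{lemmaFaceRingBasis} and Poincar\'e duality, this shows that $H_*(X_I)$ is additively generated by the classes of the face submanifolds $X_J\subset X_I$, one for each $J\geqslant I$.

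Under $(\iota_I)_*$ each such generator maps to the face class $[X_J]\in H_*(X)$, which is still proper because $J\geqslant I>\minel$. Therefore $[X_I]\cap\alpha$ lies in the span of proper face classes, and the span is an ideal. The main technical subtlety will be justifying the Gysin/transversality identification of $[X_I]\cap\alpha$ with a pushforward from $H_*(X_I)$; once this standard fact about closed smooth submanifolds of a closed smooth manifold is in place, the remaining content is a direct application of the Masuda--Panov theorem on the smaller torus manifold $X_I$.
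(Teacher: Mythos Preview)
Your proof is correct and follows essentially the same route as the paper: both express $[X_I]\cap\alpha$ as a pushforward from $H_*(X_I)$ (the paper via the projection formula $[X_I]\capp\kappa=\imath_*(\imath^*(\kappa)\capp[X_I])$, you via the equivalent Gysin formulation) and then use that $H_*(X_I)$ is spanned by face classes. Your argument is slightly more explicit in that you justify the last step by invoking the Masuda--Panov theorem for $X_I$ (whose orbit space $F_I$, being a proper face, has all faces acyclic), whereas the paper simply asserts ``there are no other classes in $H_*(X_I)$''.
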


\begin{proof}
Suppose $I\neq\minel$ and let $\imath\colon X_I\hookrightarrow X$
be the inclusion of a face submanifold. Let $\kappa$ be a
cohomology class Poincare dual to some diaphragm class $\dia$ in
$X$. Then $[X_I]\cap\dia = [X_I]\capp \kappa =
\imath_*(\imath^*(\kappa)\capp [X_I])$. The class
$\imath^*(\kappa)\capp [X_I]\in H_*(X_I)$ is a linear combination
of face classes since there are no other classes in $H_*(X_I)$.
Thus $[X_I]\cap\dia$ is a linear combination of face classes in
$H_*(X)$.
\end{proof}

\begin{rem}\label{remIntersGeneral}
The description of intersections of diaphragm cycles with
themselves and with face cycles is difficult in general.
Nevertheless, in practice one can use the following trick (cf. the
discussion of a similar problem in \cite[Sect.8]{AMPZ}). Suppose
the task is to compute $\dia_{L,a}\cap [X_I]$. If $L\cap
F_I=\emptyset$, then the intersection product is $0$. If not, find
another submanifold with boundary $L'$ such that $[L']=[L]\in
H_*(Q,\dd Q)$ and $L'\cap F_I=\emptyset$. Then, by Proposition
\ref{propRelOnDiaphragm} we have $\dia_{L,a}=\dia_{L',a}+\Sigma$,
where $\Sigma$ is a linear combination of face classes. Thus we
have
\[
\dia_{L,a}\cap [X_I]=\dia_{L',a}\cap[X_I]+\Sigma\cap[X_I] =
\Sigma\cap[X_I].
\]
The last intersection can be computed by Proposition
\ref{propIntersectFace}.
\end{rem}

%%%%%%%%%%%%%%%%%%%%%%%%%%%%%%%%%%%%%%%%%%%%%%%%%%%%%%%%%%%%%%
%
%
%
%
%
%
%
%%%%%%%%%%%%%%%%%%%%%%%%%%%%%%%%%%%%%%%%%%%%%%%%%%%%%%%%%%%%%%

\section{Examples}\label{SecExamples}

%%%%%%%%%%%%%%%%%%%%%%%%%%%%%%%%%%%%%%%%%%%%%%%%%%%%%%%%%%%%%%
%
%
%
%%%%%%%%%%%%%%%%%%%%%%%%%%%%%%%%%%%%%%%%%%%%%%%%%%%%%%%%%%%%%%

\subsection{A very concrete example}
This example is similar to the one studied in
\cite[Th.3.1]{PodSar}. For $Q$ take a square with triangular hole.
Orientations of facets and values of characteristic function are
assigned to $Q$ as shown on Fig.\ref{figSquareWithHole}, left.

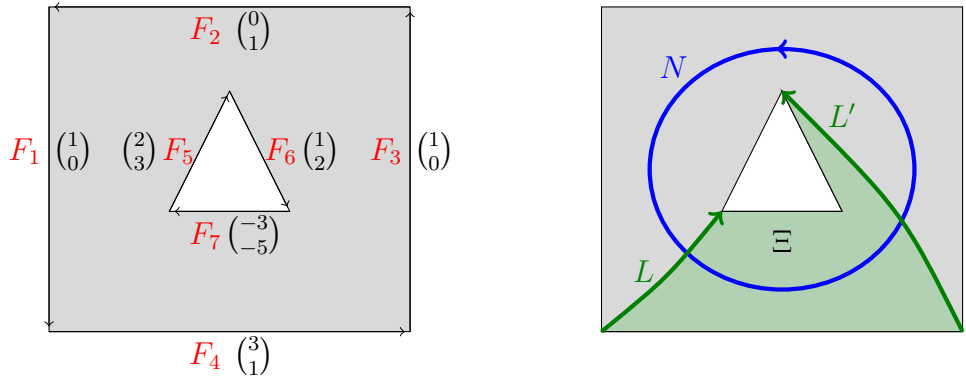
\begin{figure}[h]
\begin{center}
    \begin{tikzpicture}[scale=.8]
        \filldraw[fill=black!15]
        (0,0)--(6,0)--(6,5.4)--(0,5.4)--cycle;
        \filldraw[fill=white]
        (2,2)--(4,2)--(3,4)--cycle;
        \draw[-to,shorten >=2](0,0)--(6,0);
        \draw[-to,shorten >=2](6,0)--(6,5.4);
        \draw[-to,shorten >=2](6,5.4)--(0,5.4);
        \draw[-to,shorten >=2](0,5.4)--(0,0);
        \draw[-to,shorten >=2](4,2)--(2,2);
        \draw[-to,shorten >=2](3,4)--(4,2);
        \draw[-to,shorten >=2](2,2)--(3,4);

        \draw[red] (2.6,-0.4) node{$F_4$};
        \draw (3.4,-0.4) node{$\binom{3}{1}$};

        \draw[red] (-0.4,3) node{$F_1$};
        \draw (0.4,3) node{$\binom{1}{0}$};

        \draw[red] (5.6,3) node{$F_3$};
        \draw (6.4,3) node{$\binom{1}{0}$};

        \draw[red] (2.6,5) node{$F_2$};
        \draw (3.4,5) node{$\binom{0}{1}$};

        \draw[red] (2.6,1.6) node{$F_7$};
        \draw (3.4,1.6) node{$\binom{-3}{-5}$};

        \draw[red] (2.15,3) node{$F_5$};
        \draw (1.5,3) node{$\binom{2}{3}$};

        \draw[red] (3.85,3) node{$F_6$};
        \draw (4.5,3) node{$\binom{1}{2}$};
    \end{tikzpicture}
    \qquad\qquad
    \begin{tikzpicture}[scale=.8]
        \filldraw[fill=black!15]
        (0,0)--(6,0)--(6,5.4)--(0,5.4)--cycle;
        \filldraw[fill=white]
        (2,2)--(4,2)--(3,4)--cycle;
        \filldraw[fill=green!40!black!30]
        (6,0)..controls (5,2)..(3,4)--(4,2)--(2,2)..controls
        (1.1,0.9)..(0,0)--cycle;
        \draw[white] (3.4,-0.4) node{$\binom{3}{1}$};

        \draw[ultra thick, blue] (3,2.7) ellipse (2.2 and 2);
        \draw[<-,ultra thick, blue] (2.9,4.7)--(3.1,4.7);

        \draw[->,ultra thick, green!50!black]  (0,0)..controls (1.1,0.9)..(2,2);
        \draw[->,ultra thick, green!50!black]  (6,0)..controls (5,2)..(3,4);

        \draw[blue] (1.2,4.4) node{$N$};
        \draw[green!50!black] (0.7,1) node{$L$};
        \draw[green!50!black] (4,3.6) node{$L'$};
        \draw[black] (3,1.5) node{$\Xi$};
    \end{tikzpicture}
\end{center}
\caption{Structure of $Q$ and values of characteristic function}
\label{figSquareWithHole}
\end{figure}

Homology groups of the corresponding $4$-dimensional manifold
$X=(Q\times T^2)/\simc$ are as follows.

\textbf{(1) Face classes.} These are the following: the
fundamental class $[X]\in H_4(X)$; the classes of characteristic
submanifolds
\[
[X_1],[X_2],\ldots,[X_7]\in H_{1,1}(X)\subset H_2(X),
\]
which correspond to the sides of $Q$; and the classes of fixed
points
\[
[X_{12}],[X_{23}],[X_{34}],[X_{14}],[X_{56}],[X_{67}], [X_{57}]
\in H_{0,0}(X) = H_0(X),
\]
which correspond to vertices of $Q$. Relations on these classes
are given by Proposition~\ref{propTwoTypesOfRels}. The first-type
relations in $H_2(X)$ are:
\[
[X_1]+[X_3]+3[X_4]+2[X_5]+[X_6]-3[X_7]=0
\]
\[
[X_2]+[X_4]+3[X_5]+2[X_6]-5[X_7]=0
\]
(the coefficients are respectively first and second coordinates of
the values of characteristic function). The first type relations
on the classes $[X_{ij}]$ are encoded by the sides of of the orbit
space. These relations are the following:
\[
[X_{12}]=-[X_{23}]=[X_{34}]=-[X_{14}];
\]
\[
[X_{56}]=[X_{67}]=[X_{57}].
\]
(the signs are due to the signs of fixed points). To find
relations of the second type, we need to pick a homology class in
the image of $\delta_1\colon H_1(Q,\dd Q)\to H_0(\dd Q)$. Take for
example the class, represented by the chain $[F_{14}]-[F_{57}]$.
It gives a relation of the second type
\[
[X_{14}]-[X_{57}]=0.
\]
Thus all fixed points up to sign represent the same generator
$[\pt]\in H_0(X)$. Of course, this follows easily from the
connectivity of $X$, but here we wanted to emphasize the different
nature of two types of relations.

\textbf{(2) Spine cycles.} Consider a submanifold $N\subset Q$
representing the generator $\eta\in H_1(Q)$
(Fig.\ref{figSquareWithHole}, right). Together with the class of a
point $[T^{(\emptyset)}]\in H_0(T^2)$ it determines a spine class
$\spi_{\eta,\emptyset}\in H_{1,0}(X)=H_1(X)$. Geometrically,
$\spi_{\eta,\emptyset}$ is represented by a submanifold $N\subset
Q$ lifted by a zero-section map $Q\hookrightarrow X$.

\textbf{(3) Diaphragm cycles.} Consider the submanifold $L$
representing the generator of $H_1(Q,\dd Q)$ with the boundary
lying in the $0$-skeleton of $Q$ (see Fig.\ref{figSquareWithHole},
right). For each subset $A=\{1\},\{2\},\{1,2\}$ we have a homology
cycle in $H_{1,|A|}(X)$ represented by a pseudomanifold $(L\times
T^{(A)})/\simc$. Thus we have the generators
\[
\dia_{L,1}=[(L\times T^{(\{1\})})/\simc],\qquad
\dia_{L,2}=[(L\times T^{(\{2\})})/\simc]
\]
of $H_{1,1}(X)\subset H_2(X)$ and the generator
\[
\dia_{L,\{12\}}=[(L\times T^2)/\simc]
\]
of $H_{1,2}(X)= H_3(X)$. Let $L'$ be another submanifold
representing the same homology class in $H_1(Q,\dd Q)$ (see Figure
\ref{figSquareWithHole}). Consider a bordism $\Xi$ between $L$ and
$L'$ shown on the figure. We have
\[
\dia_{L,\{12\}} = \dia_{L',\{12\}}
\]
in $H_3(X)$ since $(\Xi\times T^2)/\simc$ is a pseudomanifold
bordism between $(L\times T^2)/\simc$ and $(L'\times T^2)/\simc$.

We have a relation $\delta \Xi = -[L_1]+[L_2]+[F_4]+[F_6]+[F_7]$.
It generates the relations
\[
-\dia_{L,1}+\dia_{L',1}+1[X_4]+2[X_6]-5[X_7]=0
\]
\[
-\dia_{L,2}+\dia_{L',2}+3[X_4]+1[X_6]-3[X_7]=0
\]
in $H_2(X)$. These relations are the boundaries of $(\Xi\times
T^{(\{1\})})/\simc$ and $(\Xi\times T^{(\{2\})})/\simc$
respectively. The coefficients are the complimentary coordinates
of characteristic function: for the cycle encoded by the first
coordinate subtorus, we take the second coordinates of
characteristic function, and vice versa. In this computation we
used the formula for the coefficients $C_{I,A}$ asserted by Lemma
\ref{lemmaToricCoefficient}.

\textbf{(4) Intersections of cycles} can be seen from the picture.
In particular, the cycles $\spi_{N,\emptyset}$ and
$\dia_{L,\{1,2\}}$ are transversal, and their intersection induces
a nondegenerate pairing between $H_1(X)$ and $H_3(X)$.

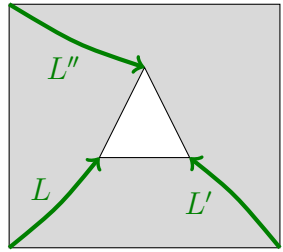
\begin{figure}[h]
\begin{center}
    \begin{tikzpicture}[scale=.6]
        \filldraw[fill=black!15]
        (0,0)--(6,0)--(6,5.4)--(0,5.4)--cycle;
        \filldraw[fill=white]
        (2,2)--(4,2)--(3,4)--cycle;

        \draw[->,ultra thick, green!50!black]  (0,0)..controls (1.1,0.9)..(2,2);
        \draw[->,ultra thick, green!50!black]  (6,0)..controls (5,1.2)..(4,2);
        \draw[->,ultra thick, green!50!black]  (0,5.4)..controls (1.5,4.5)..(3,4);

        \draw[green!50!black] (0.7,1.3) node{$L$};
        \draw[green!50!black] (4.2,1) node{$L'$};
        \draw[green!50!black] (1.2,4) node{$L''$};
    \end{tikzpicture}
\end{center}
\caption{Different representatives of diaphragm classes}
\label{figSquareWithHole2}
\end{figure}

For a nontrivial example, let us compute the intersection of
$\dia_{L,1}$ with $\dia_{L,2}$ to demonstrate the idea sketched in
Remark \ref{remIntersGeneral}. Consider the auxiliary intervals
$L'$ and $L''$ shown on Fig.\ref{figSquareWithHole2}. Similar to
the calculations above we have
\[
\dia_{L,1} = \dia_{L',1}+1[X_4]-5[X_7]
\]
\[
\dia_{L,2} = \dia_{L'',2}-1[X_1]-2[X_5]
\]
Thus $\dia_{L,1}\cap\dia_{L,2} = (\dia_{L',1}+[X_4]-5[X_7])\cap
(\dia_{L'',2}-[X_1]-2[X_5]) =
-[X_4]\cap[X_1]+10[X_7]\cap[X_5]=-[X_{14}]+10[X_{57}]=9[pt]\in
H_0(X)$.

%%%%%%%%%%%%%%%%%%%%%%%%%%%%%%%%%%%%%%%%%%%%%%%%%%%%%%%%%%%%%%
%
%
%
%%%%%%%%%%%%%%%%%%%%%%%%%%%%%%%%%%%%%%%%%%%%%%%%%%%%%%%%%%%%%%

\subsection{Toric origami manifolds}
In this subsection we apply the general method to a class of toric
origami manifolds and derive some results proved in \cite{AMPZ} in
a different way.

Toric origami manifolds (see \cite{SGP},\cite{MasPark}) appeared
in differential geometry as generalizations of symplectic toric
manifolds. The precise geometrical definition is in most part
irrelevant to our study. Essential are the following properties:
orientable toric origami manifold $X$ is a manifold with locally
standard torus action; its orbit space $Q=X/T^n$ is homotopy
equivalent to a graph $\Gamma$, and inclusion of any face in $Q$
is homotopy equivalent to the inclusion of a subgraph in $\Gamma$.

As usual, there is a principal torus bundle $Y\to Q$ such that
$X=Y/\simc$. Since $Q$ is homotopy equivalent to a graph,
$H^2(Q,\Zo^n)=0$, so the Euler class of $Y$ vanishes. Thus in
origami case we have $Y=Q\times T^n$.

Now we restrict to the case when all proper faces of $Q$ are
acyclic. Since they are homotopy equivalent to graphs, it follows
automatically that they are contractible. Let $\ddb=\dim
H_1(Q)=\dim H_1(\Gamma)$. Poincare--Lefchetz duality implies:
\[
H_q(Q,\dd Q)\cong H^{n-q}(Q)\cong\begin{cases} \Zo,\mbox{ if } q=n;\\
\Zo^{\ddb},\mbox{ if }q=n-1;\\
0,\mbox{ otherwise.}
\end{cases}
\]
Let us describe the connecting homomorphisms $\delta_i\colon
H_i(Q,\dd Q)\to H_{i-1}(\dd Q)$. For simplicity we discuss the
case $n\geqslant 4$; dimensions $2$ and $3$ can be done similarly.
When $n\geqslant 4$, lacunas in the exact sequence of the pair
$(Q,\dd Q)$ imply that $\delta_i\colon H_i(Q,\dd Q)\to H_{i-1}(\dd
Q)$ is an isomorphism for $i=n-1,n$, and trivial otherwise. We
have
\[
H_i(\dd Q)\cong\begin{cases} \ko,\mbox{ if } i=0, n-1;\\
\ko^{\ddb},\mbox{ if } i=1, n-2;\\
0,\mbox{ otherwise.}
\end{cases}
\]

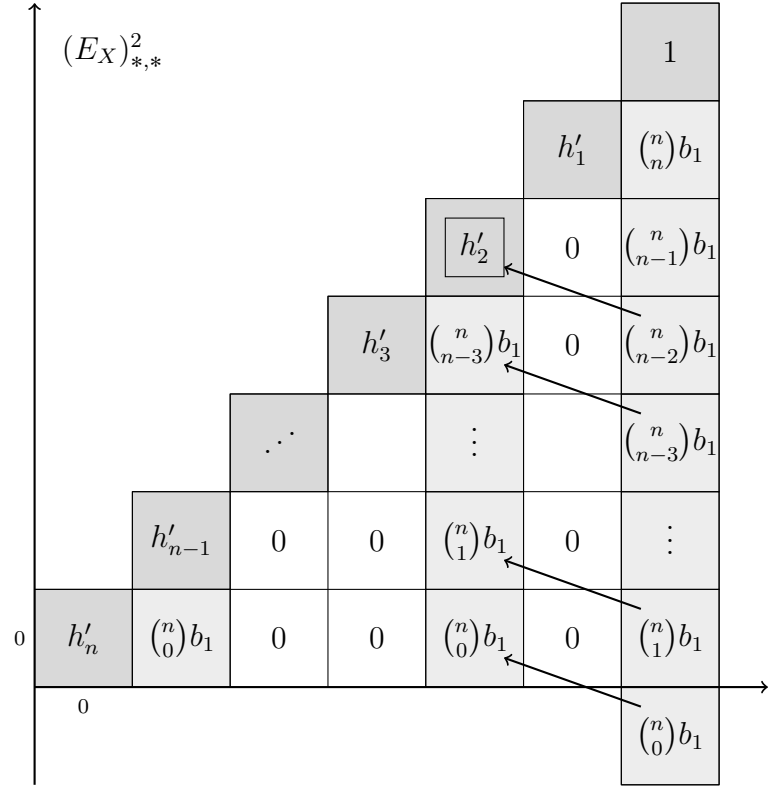
\begin{figure}[h]
\begin{center}
\begin{tikzpicture}[scale=1.3]

        \foreach \x in {0,1,...,6} \filldraw[fill=black!15,xshift=\x cm, yshift=\x cm] (0,0)--(0,1)--(1,1)--(1,0)--cycle;
        \foreach \x in {0,1,...,3} \filldraw[fill=black!7,yshift=\x cm] (4,0)--(4,1)--(5,1)--(5,0)--cycle;
        \foreach \x in {-1,0,...,5} \filldraw[fill=black!7,yshift=\x cm] (6,0)--(6,1)--(7,1)--(7,0)--cycle;
        \filldraw[fill=black!7] (1,0)--(1,1)--(2,1)--(2,0)--cycle;
        \draw (4.2,4.2)--(4.2,4.8)--(4.8,4.8)--(4.8,4.2)--cycle;

        \draw[->,thick]  (0,0)--(7.5,0); \draw[->,thick]  (0,-1)--(0,7);

        \draw (0,1)--(7,1); \draw (1,2)--(7,2); \draw
        (2,3)--(7,3); \draw (3,4)--(7,4); \draw (4,5)--(7,5); \draw
        (5,6)--(7,6); \draw (6,7)--(7,7); \draw (6,-1)--(7,-1);

        \draw (1,0)--(1,2); \draw (2,0)--(2,3); \draw
        (3,0)--(3,4); \draw (4,0)--(4,5); \draw (5,0)--(5,6); \draw
        (6,-1)--(6,7); \draw (7,-1)--(7,7);

        \draw (0.5,0.5) node{$h_n'$}; \draw (1.5,0.5) node{${n\choose
        0}\ddb$}; \draw (1.5,1.5) node{$h_{n-1}'$}; \draw (2.5,0.5)
        node{$0$}; \draw (2.5,1.5) node{$0$}; \draw (3.5,0.5)
        node{$0$}; \draw (3.5,1.5) node{$0$}; \draw (2.5,2.6)
        node{$\iddots$}; \draw (3.5,3.5) node{$h'_3$}; \draw (4.5,0.5) node{${n\choose
        0}\ddb$}; \draw (4.5,1.5) node{${n\choose 1}\ddb$}; \draw (4.5,2.6)
        node{$\vdots$}; \draw (4.5,3.5) node{${n\choose
        n-3}\ddb$}; \draw (4.5,4.5) node{$h_2'$}; \draw (5.5,0.5)
        node{$0$}; \draw (5.5,1.5) node{$0$}; \draw (5.5,3.5)
        node{$0$}; \draw (5.5,4.5) node{$0$}; \draw (5.5,5.5)
        node{$h_1'$}; \draw (6.5,-0.5) node{${n\choose 0}\ddb$}; \draw (6.5,0.5) node{${n\choose
        1}\ddb$}; \draw (6.5,1.6) node{$\vdots$}; \draw (6.5,2.5) node{${n\choose
        n-3}\ddb$}; \draw (6.5,3.5) node{${n\choose n-2}\ddb$}; \draw (6.5,4.5) node{${n\choose
        n-1}\ddb$}; \draw (6.5,5.5) node{${n\choose n}\ddb$}; \draw (6.5,6.5) node{$1$};

        \draw[->,thick] (6.2,3.8)--(4.8,4.3); \draw[->,thick]
        (6.2,2.8)--(4.8,3.3); \draw[->,thick]
        (6.2,0.8)--(4.8,1.3); \draw[->,thick] (6.2,-0.2)--(4.8,0.3);

        \draw (0.8,6.5) node{$\E{X}^2_{*,*}$}; \draw (0.5,-0.2) node{\tiny
        $0$}; \draw (-0.15,0.5) node{\tiny $0$};
\end{tikzpicture}
\end{center}
\caption{Spectral sequence for homology of orientable origami
manifold with acyclic proper faces of the orbit space. Instead of
entries we write their ranks.} \label{figOrigami}
\end{figure}

Proposition \ref{propEXstruct} implies that $\E{X}^2_{p,q}$ has
the form shown schematically on Figure~\ref{figOrigami}. The
differential $\dif{X}^2$ hitting the marked position produces
relations $R'_{\beta,A}$ of the second type on the cycles
$[X_I]\in H^{2n-4}(X)$. These relations are explicitly described
by Proposition \ref{propTwoTypesOfRels}, and the number of
independent relations is ${n\choose 2}\ddb$. Dually, this
consideration shows that the map $\ko[S]/\Theta\to H^*(X)$ has
nontrivial kernel of dimension ${n\choose 2}\ddb$ in degree $4$.

In addition, we have the following non-face cycles:
\begin{enumerate}
\item There are $\ddb$ one-dimensional spine classes, which appear
as the liftings of cycles in $\Gamma$.

\item There are $\ddb$ diaphragm classes of codimension $1$ given by
the generators of $H_{n-1}(Q,\dd Q)\cong H^1(\Gamma)$ swept by the
action of a whole torus. These cycles are equivariant. They are
dual to cycles (1).

\item There are $n\ddb$ extremal diaphragm classes of codimension
$2$. These are given by the generators of $H_{n-1}(Q,\dd Q)$ swept
around by actions of $(n-1)$-dimensional subtori. A choice of
these classes is not canonical.
\end{enumerate}

%%%%%%%%%%%%%%%%%%%%%%%%%%%%%%%%%%%%%%%%%%%%%%%%%%%%%%%%%%%%%%
%
%
%
%
%
%
%
%%%%%%%%%%%%%%%%%%%%%%%%%%%%%%%%%%%%%%%%%%%%%%%%%%%%%%%%%%%%%%

\section{Collar model}\label{SecCollar}

In this section we prove Theorem \ref{thmSocle} using an auxiliary
space $\wh{X}$.

\begin{con}
Consider the space $\wh{Q}=\dd Q\times [0,1]$. It is an
$(n-1)$-dimensional manifold with the boundary $\dd \wh{Q}$ of the
form $\dd_0\wh{Q}\sqcup\dd_1\wh{Q}$, where $\dd_\epsilon\wh{Q}=\dd
Q\times\{\epsilon\}$, $\epsilon=0,1$. We may identify $\dd_0
\wh{Q}$ with $\dd Q$ and consider $\wh{Q}$ as a filtered
topological space:
\[
Q_0\subset Q_1\subset\ldots\subset Q_{n-1}=\dd Q = \dd_0
\wh{Q}\subset \wh{Q}.
\]
One may think about $\wh{Q}$ as a collar of $\dd Q$ inside $Q$.

Consider the space $\wh{Y}=\wh{Q}\times T^n$ and the
identification space $\wh{X}=\wh{Y}/\simc$. The relation $\sim$
identifies points over $\dd_0\wh{Q}$ in the same way as it does
for $\dd Q\subset Q$, while there are no identifications over
$\dd_1\wh{Q}$. The space $\wh{X}$ is a manifold with boundary. Its
boundary consists of points over $\dd_1 Q$, hence is homeomorphic
to $\dd Q\times T^n$. The space $\wh{X}$ can be considered as a
$T^n$-invariant tubular neighborhood of the union of all
characteristic submanifolds in $X$. There are natural topological
filtrations on $\wh{Y}$ and $\wh{X}$ induced by the filtration on
$\wh{Q}$.
\end{con}

In terminology of \cite{AyV2} the space $\wh{X}$ is a Buchsbaum
pseudo-cell complex, thus Propositions \ref{propEQstruct},
\ref{propEYstruct}, and items (1)--(5) of Proposition
\ref{propEXstruct} hold for $\wh{Q}$, $\wh{Y}$, and $\wh{X}$. The
$n$-th column of all spectral sequences vanishes, since
$H_*(\wh{Q},\dd_0\wh{Q})=0$. Thus all spectral sequences
$\Ea{\wh{Q}}^r$, $\Ea{\wh{Y}}^r$, $\Ea{\wh{X}}^r$ collapse at a
first page and, consequently, the spectral sequences
$\E{\wh{Q}}^r$, $\E{\wh{Y}}^r$, $\E{\wh{X}}^r$ collapse at a
second page.

For each $I\in S_Q$, with $\dim F_I=q<n$ there is a distinguished
element $[X_I]\in H_{2q}(\wh{X}_q,\wh{X}_{q-1}) =
\E{\wh{X}}^1_{q,q}$. It survives in a spectral sequence, and gives
the fundamental class of the face submanifold $X_I\subset \wh{X}$.
Linear relations on classes $[X_I]$ in $H_*(\wh{X})$ are described
similarly to Section \ref{SecFaceSubmfds}. When $q=n-1$ there are
no relations on $[X_I]$, since there are no differentials landing
at the cell $\E{\wh{X}}^1_{n-1,n-1}$. For $q<n-1$ the relations on
$[X_I]$ are the images of $\dif{\wh{X}}^1\colon
\E{\wh{X}}^1_{q+1,q}\to\E{\wh{X}}^1_{q,q}$. These differentials
coincide with $\dif{X}^1$ thus the relations on $[X_I]$ for
$|I|>1$ are exactly $R_{J,A}$ defined in Proposition
\ref{propTwoTypesOfRels}. Thus Proposition \ref{propIsomQuotients}
implies

\begin{lemma}\label{lemmaCollarFaceClasses}
Let $V_*$ be a submodule of $H_*(\wh{X})$ generated by face
classes $[X_I]$, $I\neq\minel$. Then there is a degree reversing
linear map
\[
\tilde{\varphi}\colon V_{2q}\to (\ko[S_Q]/\Theta)_{2(n-q)}.
\]
sending $[X_I]$ to $v_I$. It is an isomorphism for $q<n-1$ and
surjective for $q=n-1$. This map is a ring homomorphism with
respect to the intersection product on $\wh{X}$.
\end{lemma}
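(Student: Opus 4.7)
The plan is to mimic the proof of Proposition \ref{propIsomQuotients} for the space $\wh{X}$, exploiting the fact that its spectral sequences collapse more rapidly. Since $H_*(\wh{Q},\dd_0\wh{Q})=0$, the $n$-th columns vanish, hence $\Ea{\wh{Q}}^r$, $\Ea{\wh{Y}}^r$, $\Ea{\wh{X}}^r$ already collapse at the first page, and equivalently $\E{\wh{Q}}^r$, $\E{\wh{Y}}^r$, $\E{\wh{X}}^r$ collapse at the second page. The key consequence, already noted just before the lemma, is that face classes of $\wh{X}$ satisfy no second-type relations $R'_{\beta,A}$.

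I define $\tilde{\varphi}$ on generators by $\tilde{\varphi}([X_I])=v_I+\Theta$. Well-definedness amounts to identifying all linear relations among $\{[X_I]\mid I\neq\minel\}$ in $H_*(\wh{X})$. From the preceding discussion, for $|I|>1$ (i.e.\ $q<n-1$) these relations are generated by the first-type relations $R_{J,A}=\sum_{I\gess{1}J}\inc{I}{J}C_{I,A}[X_I]$, and for $|I|=1$ (i.e.\ $q=n-1$) there are no relations at all. Part~(1) of the proof of Proposition \ref{propIsomQuotients} is a purely algebraic identity showing $\varphi(R_{J,A})\in\Theta$ in $\ko[S_Q]$, so it applies verbatim and $\tilde{\varphi}$ descends to $V_*$.

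Surjectivity in every degree follows from Lemma \ref{lemmaFaceRingBasis}: the cosets $v_I+\Theta$ span $\ko[S_Q]/\Theta$. For $q<n-1$ the relations on $[X_I]$ match on both sides---$R_{J,A}$ on the domain, the image of $\Theta$ on the codomain, by the argument of Proposition \ref{propIsomQuotients} applied in the appropriate degree---so $\tilde{\varphi}\colon V_{2q}\to(\ko[S_Q]/\Theta)_{2(n-q)}$ is an isomorphism. For $q=n-1$ the domain $V_{2(n-1)}$ is freely generated by $\{[X_i]\}_i$, whereas the codomain $(\ko[S_Q]/\Theta)_2$ has extra linear relations $\sum_i\lambda_{i,j}v_i=0$ coming from $\theta_1,\ldots,\theta_n$; hence only surjectivity is asserted in this top degree.

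For the ring-homomorphism statement, each submanifold $X_I$ with $I\neq\minel$ is compact, closed, and lies in the interior of $\wh{X}$ (since $\dd_0\wh{Q}$ is not part of the topological boundary $\dd_1\wh{Q}\times T^n$). The local geometric argument behind Proposition \ref{propIntersectFace} therefore still yields $[X_{I_1}]\cap[X_{I_2}]=[X_{I_1\cap I_2}]\cap\sum_{J\in I_1\vee I_2}[X_J]$, matching the defining relation $v_{I_1}v_{I_2}=v_{I_1\cap I_2}\sum_{J\in I_1\vee I_2}v_J$ of the face ring (with the fundamental class $[X_{\minel}]$ acting as the unit on the image cycles). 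The main obstacle I anticipate is making rigorous the identification $V_{2q}\cong\langle[X_I]\rangle/\langle R_{J,A}\rangle$ as a submodule of $H_{2q}(\wh{X})$---ruling out any hidden relation on face classes coming from lower filtration levels. This is handled precisely by the page-two collapse of $\E{\wh{X}}^r$ and the vanishing of its $n$-th column.
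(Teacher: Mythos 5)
Your argument is correct and follows essentially the same route as the paper: the paper derives the lemma directly from the preceding discussion (vanishing of the $n$-th column, collapse of $\E{\wh{X}}^r$ at the second page, identification of all relations on the classes $[X_I]$ in $H_*(\wh{X})$ with the first-type relations $R_{J,A}$, and an appeal to Proposition~\ref{propIsomQuotients}), which is precisely the chain of reasoning you reconstruct. One minor point: the concern you flag at the end about ``hidden relations from lower filtration levels'' is actually handled not by the collapse (which only kills higher differentials) but by the elementary fact that $F_{q-1}H_{2q}(\wh{X})=0$ because $\wh{X}_{q-1}$ has dimension $2(q-1)<2q$, so $V_{2q}\cong\Ea{\wh{X}}^{\infty}_{q,q}$ with no extension ambiguity; the collapse then ensures $\Ea{\wh{X}}^{\infty}_{q,q}=\Ea{\wh{X}}^1_{q,q}/\im\dif{\wh{X}}^1$. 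Your treatment of the ring-homomorphism clause via the geometry of Proposition~\ref{propIntersectFace} is at the same level of detail as the paper, which leaves it implicit.
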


Now we can give a geometrical proof of Theorem \ref{thmSocle}.

\begin{proof} We need to prove that the element $L'_{\beta,A}$
lies in a socle of $\ko[S_Q]/\Theta$. Thus we need to show that
$L'_{\beta,A}v_i=0$ for every vertex $i\in \ver(S_Q)$, where
$\beta\in H_q(\dd Q)$, $|A|=q$, and $q\leqslant n-2$. According to
Lemma \ref{lemmaCollarFaceClasses}, it is sufficient to show that
$R'_{\beta,A}\cap [F_i]=0$ in $H_*(\wh{X})$.

Consider a geometrical cycle $\mathcal{B}\subset \dd_0\wh{Q}$
representing $\beta$. Now we allow $\mathcal{B}$ be any
representative, and do not require that it lies in the stratum
$Q_q$. Consider the cycle $\mathcal{B}\times T^{(A)}$ in
$\dd_0\wh{Q}\times T^n$, and the corresponding cycle
$(\mathcal{B}\times T^{(A)})/\simc$ in $\wh{X}$. The latter cycle
represents the class $R'_{\beta,A}\in H_{2q}(\wh{X})$ by
definition. The space $\wh{Q}$ is homologous to $\dd_0\wh{Q}$ so
we can move the cycle $\mathcal{B}\subset \wh{Q}$ away from the
boundary $\dd_0\wh{Q}$. Thus $\mathcal{B}\cap [F_i]=\emptyset$ and
therefore $R'_{\beta,A}\cap [X_i]=0$ in $H_*(\wh{X})$.
\end{proof}

\begin{rem}
The same argument proves that $L'_{\beta,A}v_I=0$ for any simplex
$I\in S\setminus\minel$. This fact does not directly follow from
Theorem \ref{thmSocle}, since the map $\ko[m]\to\ko[S]/\Theta$ may
be not surjective in general.
\end{rem}

\begin{rem}
The only reason why we considered the collar model $\wh{X}$
instead of $X$ is that there are no additional relations in
$H_*(\wh{X})$ compared with $\ko[S_Q]/\Theta$. The space $\wh{X}$
captures the properties of $\ko[S_Q]/\Theta$ more precisely than
$X$. On the other hand, $\wh{X}$ is a manifold with boundary so
there is a geometrical intersection theory on it. This makes it an
object worth studying.
\end{rem}

\begin{rem}
The classes $R'_{\beta,A}\in \E{X}^2_{q,q}$ are the images of the
classes $\beta\times [T^{(A)}]\in H_q(\dd Q)\times H_q(T^n)$ under
the homomorphism $\fa_*^1\colon
\Ea{\wh{Y}}^1_{q,q}\to\Ea{\wh{X}}^1_{q,q}$. This homomorphism is
injective by Proposition \ref{propEXstruct}, thus the construction
gives an inclusion
\[
H_q(\dd Q)\otimes H_q(T^n)\hookrightarrow
\soc(\ko[S_Q]/\Theta)_{2(n-q)}
\]
for each $q\leqslant n-2$. When $q=n-1$, the map $H_{n-1}(\dd
Q)\otimes H_{n-1}(T^n)\to \soc(\ko[S_Q]/\Theta)_{2}$ has the
kernel of the form $\langle[\dd Q]\rangle\otimes H_{n-1}(T^n)$,
where $[\dd Q]$ denotes the fundamental class of $\dd Q$. Note
that $\dd Q$ may be disconnected, thus there could exist classes
in $H_{n-1}(\dd Q)$ different from $[\dd Q]$. So far, there exists
an injective map
\[
(H_{n-1}(\dd Q)/\langle[\dd Q]\rangle)\otimes
H_{n-1}(T^n)\hookrightarrow \soc(\ko[S_Q]/\Theta)_{2}
\]
These statements reprove the result of Novik--Swartz
\cite[Th.3.5]{NS} in case of homology manifolds.
\end{rem}

\end{document}